\makeatletter\setlength{\textwidth}{16.0cm}
\allowdisplaybreaks \numberwithin{equation}{section}
\newtheorem{theorem}{Theorem}[section]
\newtheorem{corollary}[theorem]{Corollary}
\newtheorem{lemma}[theorem]{Lemma}
\newtheorem{remark}{Remark}
\allowdisplaybreaks \numberwithin{remark}{section}
\begin{document}

\title{ Heteroclinic Cycles of a Symmetric May-Leonard Competition Model with Seasonal Succession
\thanks{This research is partially supported by NSF of China No.11871231 and Fujian Province University Key Laboratory of
Computational Science, School of Mathematical Sciences, Huaqiao University, Quanzhou, China}
\date{\empty}
\date{\empty}
\author{ {Xizhuang Xie$^{a,b}$, Lin Niu$^b$\thanks{The corresponding author, \emph {E-mail address}: niulin@ustc.edu.cn} } \\
\\[3pt]
{ \small $^{a}$School of Mathematical Sciences, Huaqiao University}\\
{\small Quanzhou, Fujian, 362021, P. R. China}\\
	{\small $^{b}$School of Mathematical Sciences}\\
	{ \small University of Science and Technology of China}\\
{ \small Hefei, Anhui, 230026, P. R. China}\\
}
}\maketitle
\begin{abstract}
	In this paper, we are concerned with the stability of heteroclinic cycles of
 the symmetric May-Leonard competition model with seasonal succession.
Sufficient conditions for stability of  heteroclinic cycles are obtained.
Meanwhile, we present the explicit expression of the carrying simplex in a special case.
By taking $\varphi$ as the switching strategy parameter for a given example,
the bifurcation of stability of the heteroclinic cycle is investigated.
We also find that there are rich dynamics (including
nontrivial periodic solutions, invariant closed curves and heteroclinic cycles) for such a new system via numerical simulation. Biologically, the result is interesting as a caricature of the
complexities that seasonal succession can introduce into the classical May-Leonard competitive model.

	\par
	\textbf{Keywords}: May-Leonard competitive model, Seasonal succession, Heteroclinic Cycles, Carrying simplex, Bifurcation

    \par
	\textbf{AMS Subject Classification (2020)}: 34C12, 34C25, 34D23, 37C65, 92D25
\end{abstract}

\section{Introduction}
Lotka-Volterra competition is modeled by a system of
differential equations describing the competition between two or
more species that compete for the same resources or habitat. There
have been extensive studies and applications of Lotka-Volterra
competition models (see \cite{C1980,SX2012,JN22017,K1974,M2002,M2007,HL1995,YX2013}). In particular, May and Leonard \cite{ML1975}
focused on the case that all three competing species have the same intrinsic growth rates
and the three species compete in the ``rock-scissors-paper" manner, which is modeled by the following form
\begin{equation}
\begin{cases}
\label{ML}
\frac{dx_1}{dt}=x_1(1-x_1-\alpha x_2-\beta x_3),\\
\frac{dx_2}{dt}=x_2(1-\beta x_1-x_2-\alpha x_3),\\
\frac{dx_3}{dt}=x_3(1-\alpha x_1-\beta x_2-x_3),\\
 (x_1(0),x_2(0),x_3(0))=x^0 \in R_+^{3},
\end{cases}
\end{equation}
where $0<\alpha<1<\beta$ and $\alpha+\beta>2$. They found numerically that system (\ref{ML}) exhibits
a general class of solutions with non-periodic oscillations of bounded amplitude but ever-increasing cycle
time; asymptotically, the system cycles from being composed almost wholly of population 1, to almost wholly 2,
 to almost wholly 3, back to almost wholly 1, etc. The rigorous proof was attributed to
Schuster, Sigmund and Wolf \cite{SSW1979}.
This interesting cyclical fluctuation phenomenon was later known as ``May-Leonard phenomenon".
In \cite{CHW1998}, Chi, Hsu and Wu
obtained a complete classification for the global dynamics of the asymmetric May-Leonard competition
model, and exhibited the May-Leonard phenomenon under certain conditions.
For discrete-time cases, Roeger and Allen \cite{RA2004} studied
two discrete-time models applicable to three competing plant species
and proved that there also exist similar dynamics to system (\ref{ML}). Recently, Jiang et al. investigated
three-dimensional Kolmogorov competitive systems admitting a carrying simplex, and derived
that these systems have heteroclinic cycles
 (see \cite{JLD2014,JL2016,JL2017,JN22017}), which
implies that May-Leonard phenomenon happens and greatly enriches existing results of heteroclinic cycles
for competitive systems.
Furthermore, Jiang, Niu and Wang \cite{JNW2016} analyzed the effects of heteroclinic cycles
and the interplay of heteroclinic attractors or repellers on the boundary of the carrying simplices for
three-dimensional competitive maps.
For more results of heteroclinic cycles, we refer to
\cite{FS1991,KB1995,KB2004,OWY2008,HB2013,GJNY2018,GJNY2020} and references therein.

\par On the other hand, alternation of seasons is a very common phenomenon in nature.
The alternations may be fast (e.g., daily light change, Litchman and Klausmeier \cite{LK2001}),
intermediate (e.g., tidal cycles and annual seasons, DeAngelis et al.\cite{D2009}), or slow (e.g., EI Nino events).
Due to the seasonal alternation, populations experience a periodic
external environment such as temperature, rainfall, humidity and wind. One impressive example occurs
in phytoplankton and zooplankton of the temperate
lakes, where the species
grow during the warmer months and die off or form resting stages
in the winter. Such phenomenon is called seasonal succession in Sommer et al.\cite{SUZLD1986}.
In \cite{HT1995}, Hu and Tessier's study on competitive interactions of two Daphnia species in Gull
Lake suggested that the strength of interspecific and intraspecific competition varied with season.
This seasonal shift in the nature of competitive interactions provides an explanation for
seasonal succession in this Daphnia assemblage.
\par In fact, seasonal forcing is a major cause of nonequilibrium dynamics.
It has been a fascinating subject for ecologists and mathematicians to explore the modeling
of nonequilibrium dynamics by means of seasonal succession. As early as 1974,
Koch \cite{K1974} considered a two species competition model in a seasonal environment, where
good and bad seasons alternate. In the good season from spring to autumn, the species
follow Lotka-Volterra competition interaction. While,
the species reduce by some constant factors due to adverse conditions in the bad season from autumn to spring through winter.
This research stated that although a chance fluctuation may affect the relative extent of growth of the two species in
a particular year leading to a change in the input ratio of populations the following spring, under fairly
general circumstances, this can affect the growth cycle of the two species in just the right way to return
the system towards its original cycle so as to permit stable.
Litchman and Klausmeier \cite{LK2001} analyzed a competition model of two species for a
single nutrient under fluctuating light with seasonal succession in the chemostat. Later,
Klausmeier \cite{K2010} described a novel approach to modeling seasonally forced food webs (called successional state dynamics-SSD). The approach treats succession as a series of state transitions
driven by both the internal dynamics of species interactions and external forcing.
It is applicable to communities where species dynamics are fast relative to the external forcing, such
as plankton and other microbes, diseases, and some insect communities.
By applying this approach
to the classical Rosenzweig-MacArthur predator-prey model, he found numerically that the forced dynamics are more complicated, showing multiannual cycles
or apparent chaos.
In Steiner et al.\cite{SSH2009},
the SSD modeling approach was then employed to generate analytical predictions of the effects of altered seasonality on species
persistence and the timing of community state transitions, which highlighted the utility of the SSD modeling
approach as a framework for predicting the effects of altered seasonality on
the structure and dynamics of multitrophic communities. In theory,
Hsu and Zhao \cite{SX2012} first studied the global dynamics of a two-species Lotka-Volterra two-species competition model with
seasonal succession and obtained a complete classification for the global dynamics via the theory of monotone dynamical systems.
 Xiao \cite{X2016} discussed the effect of the seasonal harvesting on the survival of the
population for a class of population model with
seasonal constant-yield harvesting. In recent years, SSD modeling approach has attracted considerable attention in theoretical ecology,
see \cite{YX2013,TXZ2017,BBR2017,HHS2018} and references therein. Although the seasonal succession model looks
like a periodic system, its coefficients are discontinuous and periodic in time $t$. Many known
results on periodic systems with continuous periodic coefficients are not applicable to these models
(e.g. Cushing \cite{C1980}). Besides, it is still challenging to understand the mechanisms of seasonal succession and their
impacts on the dynamics of communities and ecosystems.
\par Motivated by the SSD modeling approach in Klausmeier \cite{K2010},
 we propose a three-dimensional May-Leonard competition model with seasonal succession as follows:
\begin{equation}
\begin{cases}
\label{3ML}
\frac{dx_i}{dt}=-\lambda_i x_i,\qquad \quad \qquad \ m \omega \leq t\leq m \omega+(1-\varphi)\omega, \quad i=1,2,3, \\
\frac{dx_1}{dt}=x_1(1-x_1-\alpha x_2-\beta x_3), \ m \omega+(1-\varphi)\omega \leq t\leq (m+1)\omega,\\
\frac{dx_2}{dt}=x_2(1-\beta x_1-x_2-\alpha x_3), \ m \omega+(1-\varphi)\omega \leq t\leq (m+1)\omega,\\
 \frac{dx_3}{dt}=x_3(1-\alpha x_1-\beta x_2- x_3),  \ \ m \omega+(1-\varphi)\omega \leq t\leq (m+1)\omega,\\
\big(x_1(0),x_2(0),x_3(0)\big)=x^0 \in \mathbb{R}^3_+, \quad \qquad  \ m=0,1,2\dots\\
\end{cases}
\end{equation}
where $m\in \mathbb{Z_+}$, $\varphi \in [0,1]$, $\omega$,
 $\alpha$, $\beta$, $\lambda_i \ (i=1,2,3)$ are all positive constants.
 \par Clearly, if $\varphi=0$, then the system (\ref{3ML}) become the
 following decoupling form
 \begin{equation}
 \label{LLL}
\begin{cases}
\frac{dx_i}{dt}=-\lambda_i x_i, \qquad \quad i=1,2,3,\\
  \big(x_1(0),x_2(0),x_3(0)\big)=x^0 \in \mathbb{R}^3_+.\\
\end{cases}
\end{equation}
While, if $\varphi=1$, system (\ref{3ML}) turns out to be the symmetric May-Leonard competition model (\ref{ML}).
\par From system (\ref{3ML}), one can see that it is a time-periodic system in a seasonal environment.
 Overall period is $\omega$, and
$\varphi$ represents the switching proportion of a period between two subsystems
(\ref{LLL}) and (\ref{ML}). Biologically, $\varphi$ is used to describe
the proportion of the period in the good
season in which the species follow system (\ref{ML}), while $(1-\varphi)$
stands for the proportion of the period in the bad
season in which the species die exponentially according to system (\ref{LLL}).
\par It is not difficult to see that
system (\ref{3ML}) admits a unique nonnegative
global solution $x(t,x^0)$ on $[0,+\infty)$ for any $x^0 \in \mathbb{R}^3_+$. Since system (\ref{3ML}) is
$\omega$-periodic, we only consider the Poincar$\acute{\rm e}$ map $S$ on $\mathbb{R}^3_+$,
that is, $S(x^0)=x(\omega,x^0)$ for any $ x^0 \in \mathbb{R}_+^3$. Due to system (\ref{LLL}),
let us first define a
linear map $L$ by
$$L(x_1,x_2,x_3)=(e^{-\lambda_1(1-\varphi)\omega}x_1,e^{-\lambda_2(1-\varphi)\omega}x_2,e^{-\lambda_3(1-\varphi)\omega}x_3),\
\forall \ (x_1,x_2,x_3)\in \mathbb{R}^3_+.$$
We also let $\{Q_t\}_{t\geq
0}$ represent the solution flow associated with the symmetric May-Leonard
competition system (\ref{ML}). Then, we have
$$S(x^0)=Q_{\varphi\omega}(Lx^0), \qquad \forall x^0\in \mathbb{R}^3_+, \qquad {\rm{i.e.,}} \qquad S=Q_{\varphi\omega}\circ L.$$
Hence, it suffices to investigate the dynamics of the
discrete-time system $\{S^n\}_{n\geq 0}$. However, compared to the concrete discrete-time competitive
maps discussed in \cite{OWY2008,JL2016,JL2017,JN22017,GJNY2018,GJNY2020}, there is no explicit expression of the Poincar$\acute{\rm e}$ map
$S$ for system (\ref{3ML}). This makes the research much more difficult and complicated
on the dynamics of system (\ref{3ML}).

\par The main aim of this paper is to establish the stability criterion of heteroclinic cycles
for system (\ref{3ML}), as well as its application on a given system (\ref{3ML2}).
More precisely, it is proved that system (\ref{3ML}) admits a heteroclinic cycle $\Gamma$
connecting the three axial fixed points $R_i (i=1,2,3)$ via the theory of the carrying simplex (see Theorem \ref{thm31}).
Based on this, sufficient conditions for stability of the heteroclinic cycle
are obtained (see Theorem \ref{thm32}). In particular, when $\lambda_1=\lambda_2=\lambda_3$ and $\alpha+\beta \neq 2$,
we show that system (\ref{3ML}) inherits stability properties of the heteroclinic cycle of classical May-Leonard
system (\ref{ML}). Meanwhile, we also present
the explicit expression of the carrying simplex in the case that $\alpha+\beta= 2$
(see Corollary \ref{thm33}). For a given system (\ref{3ML2}), the critical value of the existence
of the heteroclinic cycle is determined (see Theorem \ref{thm41}).
By applying the stability criterion for system (\ref{3ML}) to the given system (\ref{3ML2}), we obtain
the stability branches of the heteroclinic cycle for such a system (see Theorem \ref{thm42}).
Moreover, the branch graph of stability is demonstrated in Figure 3.
By numerical simulation, we find that there are rich dynamics for the system
as $\varphi$ varies. To be more explicit, when $\varphi$ varies in $[0,1]$, system (\ref{3ML2})
enjoys different dynamic features: the orbit emanating from initial value
$x^0=(0.5,0.6,0.4)$ for
the Poincar$\acute{\rm e}$ map $\tilde{S}$
tends to the heteroclinic cycle $\tilde{\Gamma}$(see Figure 4)
$\rightarrow$ invariant closed curves (see Figure 6 and Figure 7)
$\rightarrow$ the positive fixed point (see Figure 8-12)
$\rightarrow$ the interior fixed point of coordinate plane $\{x_2=0\}$ (see Figure 13)
$\rightarrow$ the interior fixed point of $x_1$-axis (see Figure 14)
$\rightarrow$ the interior fixed point of the coordinate plane $\{x_3=0\}$ (see Figure 15)
$\rightarrow$ the interior fixed point of $x_2$-axis (see Figure 16)
$\rightarrow$ the trivial fixed point $O$ (see Figure 17).
From these numerical examples, one can see that
the dynamic features and patterns generated by system (\ref{3ML2}) are much richer
, which are different from concrete
discrete-time competitive maps analyzed in \cite{GJNY2018,GJNY2020,JL2016,JL2017}.
\par The paper is organized as follows. In section 2, we introduce some notations and relevant definitions. Section 3 is devoted to analyze
the existence and stability of heteroclinic cycles of system (\ref{3ML}).
We investigate the stability branches of the heteroclinic cycle for a given system (\ref{3ML2})
and present the branch graph of stability in Section 4. Furthermore,
we provide some numerical simulations to exhibit rich dynamics for system (\ref{3ML2}).
This paper ends with a discussion in Section 5.

\section{Notations and Definitions}\label{sec2}
\par Throughout this paper, let $Z_+$ denote the set of nonnegative integers.
We use $\mathbb{R}^n_+$ to define the nonnegative cone
 $\mathbb{R}^n_+:=\{x\in \mathbb{R}^n:x_i\geq 0, \ i=1,\cdots,n\}$.
Let $X\subseteq \mathbb{R}^n_+$ and $S:X \rightarrow X$
be a continuous map.
The orbit of a state $x$ for $S$ is $\gamma(x)=\{S^n(x)$, $n\in
\mathbb{Z}_+ \}$. A fixed point $x$ of $S$ is a point $x\in X$ such that
$S(x)=x$. A point $y\in X$ is called a $k$-periodic point of $S$ if there
exists some positive integer $k>1$, such that $S^k(y)=y$ and
$S^m(y)\neq y$ for every positive integer $m<k$. The $k$-periodic
orbit of the $k$-periodic point $y$,
$\gamma(y)=\{y$, $S(y)$, $S^2(y)$, $...$, $S^{k-1}(y)\}$, is often called a
periodic orbit for short.
The $\omega$-{\it limit set} of $x$ is defined by $\omega(x)
=\{y\in \mathbb{R}_+^n: S^{n_k}x\rightarrow y \ (k\rightarrow\infty)\ {\rm for \ some \ sequence} \ n_k\rightarrow
+\infty \ {\rm in} \ \mathbb{Z}_+\}$. If the orbit of a state $x$ for $S^{-1}$ exists, then
the $\alpha$-{\it limit set} of $x$ is denoted by $\alpha(x)
=\{y\in \mathbb{R}_+^n: S^{-n_k}x\rightarrow y \ (k\rightarrow\infty)\ {\rm for \ some \ sequence} \ n_k\rightarrow
+\infty \ {\rm in} \ \mathbb{Z}_+\}$.
\par  A set $V \subseteq X$ is called positively
invariant under $S$ if $SV \subseteq V$, and invariant if $SV=V$.
We call that a nonempty compact positively invariant subset $V$ repels for $S$
if there exists a neighborhood $U$ of $V$ such that given any $x \in X \setminus V$
there is a $k_0(x)$ such that $S^kx \in U^c$ (the complement of $U$) for $k\geq k_0(x)$, and
$V$ is said to be an attractor for $S$,
if there exists a neighborhood $U$ of $V$, such that $\lim\limits_{k \rightarrow +\infty }d(S^kx,V)=0$
uniformly in $x\in U$, where $d(\cdot,\cdot)$ is the metric for $X$; $V$ is referred to as
a global attractor for $S$, if for any bounded set $U\subseteq X$, $\lim\limits_{k \rightarrow +\infty}
d(S^kx,V)=0$ uniformly in $x\in U$.
 Note that if the orbit $\gamma(x)$ of $x$ has
compact closure, $\omega(x)$ is nonempty, compact and invariant.
 If $S$ is a differentiable map, we write $DS(x)$ as the Jacobian matrix of
$S$ at the point $x$ and the spectral radius of $DS(x)$ is denoted by $r(DS(x))$.
 For an $n\times n$ matrix $A$, we write $A\geq
0$ iff $A$ is a nonnegative matrix (i.e., all the entries are
nonnegative) and $A\gg 0$ iff $A$ is a positive matrix (i.e., all the
entries are positive).
\par A map $S$ is said to be \textit {competitive} in $\mathbb{R}_+^n$, if
$x,y\in \mathbb{R}_+^n$ and $S(x)<S(y)$, then $x<y$. Furthermore, $S$ is called
\textit{strongly competitive} in $\mathbb{R}_+^n$, if $S(x)<S(y)$ then $x\ll y$.

\par A \textit {carrying simplex} for the periodic map $S$ is a subset $\Sigma\subset \mathbb{R}_+^n\backslash
\{0\}$ with the following properties (see \cite[Theorem 11]{YJ2002}):
\\$\rm(P1)$ $\Sigma$ is compact, invariant and unordered;
\\$\rm(P2)$ $\Sigma$ is homeomorphic via radial projection to the
$(n-1)$-dim standard probability simplex $\Delta^{n-1}:=\{x\in
\mathbb{R}_+^n |$ $\Sigma_ix_i=1\}$;
\\$\rm(P3)$ $\forall\ x\in \mathbb{R}_+^n\backslash \{0\}$, there exists some $y\in
\Sigma$ such that $\lim\limits_{n\rightarrow \infty}|S^nx-S^ny|=0$.

\section{Heteroclinic cycle}
For system (\ref{3ML}), it is clear that
$O:=(0,0,0)$ is a trivial fixed point of the Poincar$\acute{\rm e}$ map $S$. Denote $h_i:=(\varphi-\lambda_i(1-\varphi))\omega, i=1,2,3.$
By Hsu and Zhao \cite[Lemma 2.1]{SX2012}, if $h_i>0$ $(i=1,2,3)$, then $S$ has three axial fixed points, written as
$R_1:=(x_1^*,0,0)$, $R_2:=(0,x_2^*,0)$ and $R_3:=(0,0,x_3^*)$. Besides,
there are three planar fixed points for
$S$ in three coordinate planes under appropriate conditions (see \cite[Theorem 2.2-2.4]{SX2012} or \cite[Theorem 3.6]{NWX2021}).
First, we provide the existence result of carrying simplex for system (\ref{3ML}).
\begin{lemma}{\rm (The existence of the carrying
simplex)}
\label{lem20}
Assume that $h_i>0, i=1,2,3$.
 Let $S:\mathbb{R}_+^3\rightarrow \mathbb{R}_+^3$ be the Poincar$\acute{e}$ map induced by
system (\ref{3ML}). Then, system (\ref{3ML}) admits a two-dimensional carrying simplex $\Sigma$ which attracts every nontrivial orbit in $\mathbb{R}_+^3$.
\end{lemma}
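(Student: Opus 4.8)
The plan is to verify that the Poincar\'e map $S=Q_{\varphi\omega}\circ L$ satisfies the hypotheses of the carrying-simplex existence theorem \cite[Theorem 11]{YJ2002}, working entirely through this factorization rather than through any closed form of $S$ (which, as stressed in the introduction, is unavailable). First I would record the structural prerequisites. Since $L(x_1,x_2,x_3)=(e^{-\lambda_1(1-\varphi)\omega}x_1,e^{-\lambda_2(1-\varphi)\omega}x_2,e^{-\lambda_3(1-\varphi)\omega}x_3)$ is a positive diagonal linear map and $Q_t$ is the flow of the Kolmogorov system (\ref{ML}), both factors are $C^1$, map $\mathbb{R}^3_+$ into itself, and leave $\mathbb{R}^3_+$, each coordinate plane, and each coordinate axis invariant; hence so does $S$, with $S(O)=O$, and under $h_i>0$ the three axial fixed points $R_i$ exist by \cite[Lemma 2.1]{SX2012}.

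Second, the crucial point is that $S$ is strongly competitive. Because (\ref{ML}) is a competitive system, its backward flow is cooperative, so the variational matrix of $Q_{\varphi\omega}$ has an entrywise nonnegative inverse; more precisely $[DQ_{\varphi\omega}(Lx)]^{-1}=DQ_{-\varphi\omega}(Q_{\varphi\omega}(Lx))$, and in the interior the off-diagonal entries $-\alpha x_i$, $-\beta x_i$ of the vector field render the system irreducible, so this inverse is in fact $\gg 0$. Since $DL=\mathrm{diag}(e^{-\lambda_i(1-\varphi)\omega})$ is a positive diagonal matrix, the chain rule gives $[DS(x)]^{-1}=(DL)^{-1}[DQ_{\varphi\omega}(Lx)]^{-1}=\mathrm{diag}(e^{\lambda_i(1-\varphi)\omega})\,[DQ_{\varphi\omega}(Lx)]^{-1}\gg 0$, which is exactly the sign structure forcing $S^{-1}$ to be strongly monotone; equivalently, $S(x)<S(y)$ implies $x\ll y$, so $S$ is strongly competitive (and, in particular, injective with nonsingular Jacobian).

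Third, I would establish dissipativity and the repelling character of $O$. For dissipativity, along the growth phase each component obeys $\dot x_i\le x_i(1-x_i)$, while $L$ is a strict contraction, so a standard comparison argument confines every orbit eventually to a fixed box, yielding a compact global attractor. For the origin, linearizing (\ref{ML}) at $O$ gives the identity as the vector-field Jacobian, hence $DQ_{\varphi\omega}(O)=e^{\varphi\omega}I$ and therefore $DS(O)=DQ_{\varphi\omega}(O)\,DL=\mathrm{diag}(e^{h_i})$, where $h_i=(\varphi-\lambda_i(1-\varphi))\omega$; under the assumption $h_i>0$ every eigenvalue exceeds $1$, so $r(DS(O))>1$ and $O$ repels. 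With strong competitiveness, the compact global attractor, and the repelling origin verified, \cite[Theorem 11]{YJ2002} produces a two-dimensional carrying simplex $\Sigma$ attracting every nontrivial orbit in $\mathbb{R}^3_+$, as claimed.

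The main obstacle I anticipate is precisely that $S$ has no explicit expression, so none of these properties can be read off by direct computation; each must be transferred from the continuous-time flow $Q_t$ and the linear map $L$ to the composite discrete map. The delicate part is carrying the competitive structure of $Q_t$ through to the positivity of $[DS(x)]^{-1}$ uniformly on $\mathbb{R}^3_+$ (handling the boundary, where irreducibility can degenerate, with care) and matching the regularity and dissipativity requirements of \cite{YJ2002} exactly; the repelling-origin computation, by contrast, is the one place where the standing hypothesis $h_i>0$ enters transparently.
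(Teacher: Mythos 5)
Your argument is correct in outline, but it takes a different route from the paper: the paper's entire proof is a one-line citation of Niu, Wang and Xie \cite[Theorem 2.3]{NWX2021}, specialized to $n=3$, $b_i=a_{ii}=1$, $a_{12}=a_{23}=a_{31}=\alpha$, $a_{32}=a_{21}=a_{13}=\beta$, whereas you re-derive that result from scratch by verifying the hypotheses of Wang--Jiang \cite[Theorem 11]{YJ2002} directly for the composite map $S=Q_{\varphi\omega}\circ L$. The substance of your verification (positivity of $[DS(x)]^{-1}$ via the time-reversed cooperative variational equation, dissipativity by comparison with the logistic bound $\dot x_i\le x_i(1-x_i)$, and $DS(O)=\mathrm{diag}(e^{h_i})$ making $O$ a repeller under $h_i>0$) is exactly what the cited reference carries out in general dimension, so nothing is lost mathematically; what you gain is a self-contained proof, at the cost of length. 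Two points deserve tightening if you keep your version. First, the boundary degeneracy you flag is real: on a face $\{x_i=0\}$ the irreducibility of $-Df(u(t))$ fails, so $[DS(x)]^{-1}$ is only $\ge 0$ there, and the hypotheses of \cite{YJ2002} are accordingly phrased in terms of the retrotone property together with the analogous conditions on the restriction of $S$ to each face, not in terms of strong competitiveness holding uniformly on all of $\mathbb{R}_+^3$; you should verify the face restrictions (which are again Poincar\'e maps of lower-dimensional seasonal-succession systems) rather than claim $[DS(x)]^{-1}\gg 0$ everywhere. Second, \cite[Theorem 11]{YJ2002} also requires each axial fixed point $R_i$ to attract on its open positive axis; this follows from the one-dimensional dynamics in \cite[Lemma 2.1]{SX2012} under $h_i>0$, but you should say so explicitly rather than only asserting existence of the $R_i$.
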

\begin{proof}
The statement is a result of Niu et al. \cite[Theorem 2.3]{NWX2021} for
$n=3$, $b_i=a_{ii}=1 \ ( i=1,2,3)$,
$a_{12}=a_{23}=a_{31}=\alpha$ and $a_{32}=a_{21}=a_{13}=\beta$, where $b_i, a_{ij}, i,j=1,2,3$
are defined in \cite{NWX2021}.
\end{proof}

\par By the stability analysis of equilibria, we have
\begin{lemma}
\label{lem21}
{\rm (Stability of the fixed point $O$)} Given system (\ref{3ML}), then
\begin{enumerate}[\rm(i)]
\item If $h_i<0$ $(i=1,2,3)$, then $O$ is an asymptotically stable fixed point of $S$.
\item If one of three values $h_1,h_2$ and $h_3$ is greater than $0$,
 then $O$ is an unstable fixed point of $S$. In particular,
 if $h_i>0$ $
 (i=1,2,3)$, then $O$ is a hyperbolic repeller.
\end{enumerate}
\end{lemma}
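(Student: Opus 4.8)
The plan is to reduce the stability question to an eigenvalue computation for the Jacobian $DS(O)$, which can be obtained in closed form even though $S$ itself cannot. Since $S = Q_{\varphi\omega}\circ L$ and both factors fix the origin (the map $L$ is linear, and $O$ is an equilibrium of the May--Leonard vector field, so $Q_t(O)=O$ for every $t$), the chain rule gives $DS(O) = DQ_{\varphi\omega}(O)\cdot DL(O)$.

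First I would linearize the May--Leonard field (\ref{ML}) at $O$. Differentiating the right-hand sides and evaluating at $O$, every off-diagonal entry carries a factor $x_i$ and hence vanishes, while each diagonal entry equals $1$; thus the Jacobian of the vector field at $O$ is the identity. Consequently the variational equation along the constant solution $x\equiv O$ is $\dot y = y$, whose fundamental matrix is $e^{t}I$, so that $DQ_{\varphi\omega}(O)=e^{\varphi\omega}I$. Since $DL(O)=L=\mathrm{diag}\big(e^{-\lambda_1(1-\varphi)\omega},e^{-\lambda_2(1-\varphi)\omega},e^{-\lambda_3(1-\varphi)\omega}\big)$, I obtain
$$DS(O)=\mathrm{diag}\big(e^{(\varphi-\lambda_1(1-\varphi))\omega},\,e^{(\varphi-\lambda_2(1-\varphi))\omega},\,e^{(\varphi-\lambda_3(1-\varphi))\omega}\big)=\mathrm{diag}\big(e^{h_1},e^{h_2},e^{h_3}\big),$$
recognizing the exponents as exactly the quantities $h_i=(\varphi-\lambda_i(1-\varphi))\omega$.

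With $DS(O)$ diagonal, its eigenvalues are precisely $e^{h_1},e^{h_2},e^{h_3}$, and the conclusions follow from the standard principle of linearized stability for fixed points of discrete maps. If $h_i<0$ for all $i$, then every eigenvalue lies strictly inside the unit circle, so $r(DS(O))<1$ and $O$ is asymptotically stable, giving (i). If some $h_i>0$, the corresponding eigenvalue exceeds $1$ in modulus, so $O$ is unstable; and if all $h_i>0$, then all three eigenvalues lie strictly outside the unit circle with none on it, making $O$ a hyperbolic repeller. This yields (ii).

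The computation itself is routine; the only point requiring care is the justification that $DQ_{\varphi\omega}(O)=e^{\varphi\omega}I$, that is, that the derivative of the time-$\varphi\omega$ flow map at the equilibrium $O$ equals the exponential of the vector-field Jacobian there. This is the standard fact that the derivative of the flow along an orbit solves the variational equation, applied to the constant orbit $O$. I expect this to be the main (though minor) obstacle, since everything else reduces to reading off the diagonal entries and applying the linearized stability criterion.
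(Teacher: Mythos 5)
Your proposal is correct and follows essentially the same route as the paper: both compute $DS(O)$ by the chain rule as the product of the fundamental matrix of the variational equation along the constant orbit $O$ (which is $e^{\varphi\omega}I$ since the May--Leonard Jacobian at $O$ is the identity) with the diagonal linear map $L$, obtaining eigenvalues $e^{h_1},e^{h_2},e^{h_3}$ and concluding by linearized stability. No substantive difference.
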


\begin{proof}
Let $f(x)=(f_1,f_2,f_3)^T$, where

$$\left\{
\begin{array}{ll}
\displaystyle f_1=x_1(1-x_1-\alpha x_2-\beta x_3),\\
\displaystyle f_2=x_2(1-\beta x_1-x_2-\alpha x_3),\\
\displaystyle f_3=x_3(1-\alpha x_1-\beta x_2-x_3).\\
\end{array}\right.
$$
Then, $Df(x)=$
{\small $$
\left (
\begin{matrix}
1-2x_1-\alpha x_2-\beta x_3 & -\alpha x_1 & -\beta x_1 \\
-\beta x_2 & 1-2x_2-\beta x_1-\alpha x_3 & -\alpha x_2\\
-\alpha x_3 & -\beta x_3 & 1-2x_3-\alpha x_1-\beta x_2
\end{matrix}
\right ).
$$}
\\For simplicity, we denote $u(t,x):=Q_t(x)$ and $V(t,x):=D_xu(t,x)=D_xQ_t(x)$, then
$S(x)=Q_{\varphi\omega}(Lx)=u(\varphi\omega,Lx)$, and hence,
\begin{align*}
DS(x)
&=D(Q_{\varphi\omega}(Lx))\cdot
D(Lx)\\
&=V(\varphi\omega,Lx)\cdot D(Lx)\\
&=V(\varphi\omega,Lx)\cdot {\rm diag} \big({e^{-\lambda_1(1-\varphi)\omega}},{e^{-\lambda_2(1-\varphi)\omega}},{e^{-\lambda_3(1-\varphi)\omega}}
 \big).
\end{align*}
Note that $V(t,x)$ satisfies
$$\frac{dV(t)}{dt}=Df(u(t,x))V(t),\quad V(0)=I.$$
Taking $O=(0,0,0)$, we have $u(t,LO)=(0,0,0)$, then $Df(u(t,LO))=
{\rm diag} (1,1,1),$ and hence, $$V(\varphi\omega,LO)=
{\rm diag} \Big(e^{\int_0^{\varphi\omega }1dt},e^{\int_0^{\varphi\omega }1dt},e^{\int_0^{\varphi\omega} 1dt}
\Big).$$ By the expression of $S$, it follows that $$DS(O)= \left (
\begin{matrix}
e^{(\varphi-\lambda_1(1-\varphi))\omega} & 0 & 0 \\
0 & e^{(\varphi-\lambda_2(1-\varphi))\omega} & 0 \\
0 & 0 & e^{(\varphi-\lambda_3(1-\varphi))\omega}
\end{matrix}
\right ). $$ Consequently,
the matrix $DS(O)$ has three positive
eigenvalues $\mu_1,\mu_2$ and $\mu_3$ given by
$$\mu_1=e^{h_1}, \qquad \mu_2=e^{h_2} \quad  {\rm and} \quad \mu_3=e^{h_3}.$$ Clearly, if $h_i<0 $ $(i=1,2,3)$, then $\mu_i<1$, and
hence $O$ is an asymptotically stable fixed point of $S$;
if $h_i>0$, then $\mu_i>1,i=1,2,3$, which implies that $O$ is a hyperbolic repeller.
We have proved the lemma.
\end{proof}

\begin{lemma}
\label{lem22}
{(Stability of the axial fixed point $R_1$)} Given system (\ref{3ML}), suppose that $h_i>0,$
$i=1,2,3$, then
\begin{enumerate}
\item[(i)] If
$h_2<\beta h_1$,
$h_3<\alpha h_1$, then
$R_1$ is an asymptotically stable fixed point of $S$.
\item[(ii)] If
$h_2>\beta h_1$,
$h_3>\alpha h_1$, then $R_1$ is a saddle point with two-dimensional unstable manifolds.
\item[(iii)] If
$(h_2-\beta h_1)(h_3-\alpha h_1)<0$,
then $R_1$ is a saddle point with one-dimensional unstable manifolds.
\end{enumerate}
\end{lemma}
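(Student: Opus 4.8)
The plan is to linearize the Poincar\'e map at $R_1$ using the identity
\[
DS(x)=V(\varphi\omega,Lx)\cdot\mathrm{diag}\big(e^{-\lambda_1(1-\varphi)\omega},\,e^{-\lambda_2(1-\varphi)\omega},\,e^{-\lambda_3(1-\varphi)\omega}\big)
\]
established in the proof of Lemma~\ref{lem21}, where $V(t,x)$ solves the variational equation $\dot V=Df(u(t,x))V$ with $V(0)=I$. The first observation I would make is that the $x_1$-axis is invariant under the flow $Q_t$ of (\ref{ML}), so the orbit through $LR_1=(e^{-\lambda_1(1-\varphi)\omega}x_1^*,0,0)$ never leaves it. Thus I would write $u(t,LR_1)=(\xi(t),0,0)$, where $\xi$ solves the scalar logistic equation $\dot\xi=\xi(1-\xi)$ with $\xi(0)=e^{-\lambda_1(1-\varphi)\omega}x_1^*$, and the fixed-point identity $S(R_1)=R_1$ forces the terminal condition $\xi(\varphi\omega)=x_1^*$.

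Next I would exploit the structure of $Df$ along this orbit. Evaluating the Jacobian at a point $(\xi,0,0)$, all three sub-diagonal entries vanish, so $Df(\xi,0,0)$ is upper triangular with diagonal $(1-2\xi,\,1-\beta\xi,\,1-\alpha\xi)$. Since the product of upper-triangular matrices is upper triangular and $V(0)=I$, the solution of $\dot V=Df(u(t,LR_1))V$ stays upper triangular, and hence so does $DS(R_1)$, being its product with a diagonal matrix. The eigenvalues of $DS(R_1)$ are therefore its diagonal entries; moreover each diagonal entry of $V$ satisfies the decoupled scalar equation $\dot V_{ii}=a_{ii}(t)V_{ii}$, giving $V_{ii}(\varphi\omega)=\exp\!\big(\int_0^{\varphi\omega}a_{ii}(s)\,ds\big)$ with $a_{11}=1-2\xi$, $a_{22}=1-\beta\xi$, $a_{33}=1-\alpha\xi$.

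The crux is to evaluate $\int_0^{\varphi\omega}\xi(s)\,ds$ without any explicit formula for $\xi$ or $x_1^*$, recalling that $S$ has no closed form. For this I would divide the logistic equation by $\xi$ to obtain $\dot\xi/\xi=1-\xi$, so that $\int_0^{\varphi\omega}(1-\xi)\,ds=\ln\frac{\xi(\varphi\omega)}{\xi(0)}=\ln e^{\lambda_1(1-\varphi)\omega}=\lambda_1(1-\varphi)\omega$, whence $\int_0^{\varphi\omega}\xi(s)\,ds=\varphi\omega-\lambda_1(1-\varphi)\omega=h_1$. Substituting into the three diagonal exponents and folding in the factors $e^{-\lambda_i(1-\varphi)\omega}$ collapses everything to
\[
\nu_1=e^{-h_1},\qquad \nu_2=e^{\,h_2-\beta h_1},\qquad \nu_3=e^{\,h_3-\alpha h_1},
\]
the three eigenvalues of $DS(R_1)$.

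Finally I would read off the stability types. Because $h_1>0$ we always have $\nu_1<1$, so the axial direction is stable and the classification is governed entirely by the signs of $h_2-\beta h_1$ and $h_3-\alpha h_1$: both negative gives $\nu_2,\nu_3<1$ and asymptotic stability, as in (i); both positive gives $\nu_2,\nu_3>1$ and a two-dimensional unstable manifold, as in (ii); opposite signs gives exactly one eigenvalue exceeding $1$ and a one-dimensional unstable manifold, as in (iii). I expect the only genuine obstacle to be the evaluation of $\int_0^{\varphi\omega}\xi\,ds$: since the Poincar\'e map is not explicit, the whole argument hinges on converting the implicit terminal condition $\xi(\varphi\omega)=x_1^*$ into the clean identity $\int_0^{\varphi\omega}\xi\,ds=h_1$ via the logarithmic trick, after which the triangular structure renders the eigenvalues immediate.
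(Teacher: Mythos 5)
Your proposal is correct and follows essentially the same route as the paper: linearize via $DS(R_1)=V(\varphi\omega,LR_1)\cdot D(LR_1)$, exploit the upper-triangular structure of $Df$ along the axial orbit, and reduce the diagonal exponents to $\int_0^{\varphi\omega}u_1^*(t)\,dt=h_1$ using the logistic equation and the fixed-point condition. Your logarithmic trick for evaluating that integral is exactly the computation the paper performs (stated there more tersely as ``integrating the equation from $0$ to $\varphi\omega$''), and the resulting eigenvalues and case analysis coincide.
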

\begin{proof}
Let $u(t,LR_1):=(u_1^*(t),0,0)$. By the proof of Lemma \ref{lem21}, it is easy to see that
$$Df(u(t,LR_1))= \left (
\begin{matrix}
1-2u_1^*(t) & -\alpha u_1^*(t) & -\beta u_1^*(t) \\
0 & 1-\beta u_1^*(t) & 0 \\
0 & 0 & 1-\alpha u_1^*(t)
\end{matrix}
\right ). $$ Then,
$$V(\varphi\omega,LR_1)= \left (
\begin{matrix}
e^{\int_0^{\varphi\omega}1-2u_1^*(t)dt} & * & *\\
0 & e^{\int_0^{\varphi\omega}1-\beta u_1^*(t)dt} & * \\
0 & 0 & e^{\int_0^{\varphi\omega}1-\alpha u_1^*(t)dt}
\end{matrix}
\right ),$$
where $*$ represents unknown algebraic expressions. Observe that $u_1^*(t)$ satisfies
$$\frac{du_1^*(t)}{dt}=u_1^*(t)(1-u_1^*(t)).$$
Integrating the above equation for $t$ from $0$ to $\varphi\omega$, we have
$$\int_0^{\varphi\omega}u_1^*(t)dt=(\varphi-\lambda_1(1-\varphi))\omega,$$
and then, $DS(R_1)=$
$$ \left (
\begin{matrix}
e^{-(\varphi-\lambda_1(1-\varphi))\omega} & * & *\\
0 & e^{[(\varphi-\lambda_2(1-\varphi))-\beta (\varphi-\lambda_1(1-\varphi))]\omega} & * \\
0 & 0 &
e^{[(\varphi-\lambda_3(1-\varphi))-\alpha(\varphi-\lambda_1(1-\varphi))]\omega}
\end{matrix}
\right ).$$
Thus,
the matrix $DS(R_1)$ has three positive
eigenvalues $\mu_1,\mu_2$ and $\mu_3$ given by
$$\mu_1=e^{-h_1}<1, \qquad \mu_2=e^{h_2-\beta h_1} \quad {\rm and} \quad \mu_3=e^{h_3-\alpha h_1},$$
which implies that the statements (i)-(iii) are valid. The proof is completed.
\end{proof}
\par By symmetric arguments, we also have
\begin{lemma}
\label{lem23}
(Stability of the axial fixed point $R_2$) Given system (\ref{3ML}), suppose that $h_i>0,
i=1,2,3$, then
\begin{enumerate}
\item[(i)] If
$h_1<\alpha h_2$,
$h_3<\beta h_2$, then
$R_2$ is an asymptotically stable fixed point of $S$.
\item[(ii)] If
$h_1>\alpha h_2$,
$h_3>\beta h_2$, then $R_2$ is a saddle point with two-dimensional unstable manifolds.
\item[(iii)] If
$(h_1-\alpha h_2)(h_3-\beta h_2)<0$, then
$R_2$ is a saddle point with one-dimensional unstable manifolds.
\end{enumerate}
\end{lemma}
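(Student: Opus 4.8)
The plan is to mirror the computation carried out for $R_1$ in Lemma \ref{lem22}, exploiting the cyclic symmetry of the May-Leonard nonlinearity. The key observation is that under the substitution $(x_1,x_2,x_3)\mapsto(x_2,x_3,x_1)$ the vector field $f$ is invariant: the equation for $x_2$ plays the role that the equation for $x_1$ played before, with the roles of $\alpha$ and $\beta$ interchanged in the relevant off-diagonal positions. Concretely, I would write $u(t,LR_2):=(0,u_2^*(t),0)$, where $u_2^*(t)$ denotes the second component of the solution flow started at $LR_2$.

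First I would compute $Df(u(t,LR_2))$ at this boundary trajectory. Because $x_1=x_3=0$ along the orbit, the middle row of the Jacobian reduces to the diagonal entry $1-2u_2^*(t)$ while the first and third diagonal entries become $1-\alpha u_2^*(t)$ and $1-\beta u_2^*(t)$ respectively (reading off the coefficients from $f_1$ and $f_3$ evaluated on the $x_2$-axis). The matrix $Df(u(t,LR_2))$ is then triangular after a suitable reordering, so that $V(\varphi\omega,LR_2)$ is again triangular with diagonal entries
\begin{equation*}
e^{\int_0^{\varphi\omega}(1-\alpha u_2^*(t))\,dt},\qquad e^{\int_0^{\varphi\omega}(1-2u_2^*(t))\,dt},\qquad e^{\int_0^{\varphi\omega}(1-\beta u_2^*(t))\,dt}.
\end{equation*}

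Next I would invoke the scalar logistic identity for $u_2^*(t)$, namely $\frac{du_2^*}{dt}=u_2^*(1-u_2^*)$, which upon integration over $[0,\varphi\omega]$ yields $\int_0^{\varphi\omega}u_2^*(t)\,dt=(\varphi-\lambda_2(1-\varphi))\omega=h_2$. Substituting this into the three diagonal exponents, multiplying by the diagonal entries $e^{-\lambda_i(1-\varphi)\omega}$ coming from $L$, and simplifying gives the three positive eigenvalues of $DS(R_2)$ as $\mu_2=e^{-h_2}<1$, together with $\mu_1=e^{h_1-\alpha h_2}$ and $\mu_3=e^{h_3-\beta h_2}$. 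The stability trichotomy (i)--(iii) then follows immediately by comparing each of $h_1-\alpha h_2$ and $h_3-\beta h_2$ with zero, exactly as in Lemma \ref{lem22}. There is no genuine obstacle here; the only point requiring care is the bookkeeping of which coefficient ($\alpha$ versus $\beta$) attaches to which transverse direction, since the cyclic structure of the model sends the pair $(\beta,\alpha)$ in $R_1$'s transverse eigenvalues to the pair $(\alpha,\beta)$ for $R_2$. This is precisely the content of the phrase ``by symmetric arguments,'' and a clean way to justify it rigorously is to apply the linear change of coordinates realizing the cyclic permutation and note that it conjugates $S$ to itself while permuting the $R_i$.
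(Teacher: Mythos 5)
Your proposal is correct and follows exactly the route the paper intends: the paper proves Lemma \ref{lem23} simply ``by symmetric arguments'' from Lemma \ref{lem22}, and your explicit computation of $Df(u(t,LR_2))$ along the $x_2$-axis, the identity $\int_0^{\varphi\omega}u_2^*(t)\,dt=h_2$, and the resulting eigenvalues $e^{h_1-\alpha h_2}$, $e^{-h_2}$, $e^{h_3-\beta h_2}$ are precisely the intended instantiation of that symmetry. The bookkeeping of $\alpha$ versus $\beta$ in the transverse directions is also correct.
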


\begin{lemma}
\label{lem24}
(Stability of the axial fixed point $R_3$) Given system (\ref{3ML}), suppose that $h_i>0,
i=1,2,3$, then
\begin{enumerate}
\item[(i)] If
$h_1<\beta h_3$,
$h_2<\alpha h_3$, then
$R_3$ is an asymptotically stable fixed point of $S$.
\item[(ii)] If
$h_1>\beta h_3$,
$h_2>\alpha h_3$, then $R_3$ is a saddle point with two-dimensional unstable manifolds.
\item[(iii)] If
$(h_1-\beta h_3)(h_2-\alpha h_3)<0$, then
$R_3$ is a saddle point with one-dimensional unstable manifolds.
\end{enumerate}
\end{lemma}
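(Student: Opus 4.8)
The plan is to reproduce, up to the cyclic relabeling of the competition coefficients, the argument already carried out for $R_1$ in Lemma~\ref{lem22}. Since $R_3=(0,0,x_3^*)$ sits on the $x_3$-axis, which is invariant under both the diagonal map $L$ and the flow $Q_t$, the orbit through $LR_3$ never leaves that axis; I therefore write $u(t,LR_3)=(0,0,u_3^*(t))$, where the scalar $u_3^*$ obeys the logistic equation $\frac{du_3^*}{dt}=u_3^*(1-u_3^*)$.

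First I would substitute $x=(0,0,u_3^*)$ into the Jacobian $Df(x)$ recorded in the proof of Lemma~\ref{lem21}, obtaining the \emph{lower}-triangular matrix
\[
Df(u(t,LR_3))=\begin{pmatrix} 1-\beta u_3^*(t) & 0 & 0 \\ 0 & 1-\alpha u_3^*(t) & 0 \\ -\alpha u_3^*(t) & -\beta u_3^*(t) & 1-2u_3^*(t) \end{pmatrix}
\]
(in contrast to the upper-triangular form appearing for $R_1$). Triangularity passes to the variational matrix $V(\varphi\omega,LR_3)$, whose diagonal entries are the exponentials of the $t$-integrals of the diagonal of $Df$. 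The one computation I would isolate, exactly as in Lemma~\ref{lem22}, is the integral identity $\int_0^{\varphi\omega}u_3^*(t)\,dt=h_3$: writing the logistic equation as $\frac{d}{dt}\ln u_3^*=1-u_3^*$ and integrating over $[0,\varphi\omega]$, with $u_3^*(0)=e^{-\lambda_3(1-\varphi)\omega}x_3^*$ and $u_3^*(\varphi\omega)=x_3^*$ (the latter because $R_3$ is fixed by $S$), yields $\int_0^{\varphi\omega}(1-u_3^*)\,dt=\lambda_3(1-\varphi)\omega$, and hence the claimed value of $\int_0^{\varphi\omega}u_3^*\,dt$.

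Finally, since $DS(R_3)=V(\varphi\omega,LR_3)\cdot L$ is a product of a lower-triangular matrix with the diagonal matrix $L$, it is again lower triangular, and I would read its three positive eigenvalues straight off the diagonal:
\[
\mu_1=e^{h_1-\beta h_3},\qquad \mu_2=e^{h_2-\alpha h_3},\qquad \mu_3=e^{-h_3}.
\]
Because $h_3>0$ already forces $\mu_3<1$, the stability type of $R_3$ depends only on the signs of $h_1-\beta h_3$ and $h_2-\alpha h_3$, and cases (i)-(iii) follow at once according to whether both, neither, or exactly one of $\mu_1,\mu_2$ exceeds $1$. I do not expect a genuine obstacle: the calculation is routine once the integral identity is in hand, and the only point demanding care is the bookkeeping of the ``rock-scissors-paper'' permutation, namely ensuring that the $x_1$-direction acquires the factor $\beta u_3^*$ and the $x_2$-direction the factor $\alpha u_3^*$ (and not the other way round).
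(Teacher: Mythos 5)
Your proposal is correct and is exactly the ``symmetric argument'' the paper invokes for $R_3$: you transcribe the proof of Lemma~\ref{lem22} along the $x_3$-axis, and the lower-triangular Jacobian, the integral identity $\int_0^{\varphi\omega}u_3^*(t)\,dt=h_3$, and the resulting eigenvalues $e^{h_1-\beta h_3}$, $e^{h_2-\alpha h_3}$, $e^{-h_3}$ all check out and match the eigenvalues quoted later in the proof of Theorem~\ref{thm32}. No gaps.
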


\par For the sake of convenience, we introduce the following notation:
\vspace{2mm}
\\\textbf{$(\tilde{H})$} \quad \qquad \qquad  $h_2<\beta h_1, h_1>\alpha h_2;\ h_3<\beta h_2, h_2>\alpha h_3;\ h_1<\beta h_3, h_3>\alpha h_1.$
\vspace{2mm}
\\Moreover, we write $\vartheta$ as
$$\vartheta:=(h_1-\beta h_3)(h_2-\beta h_1)(h_3-\beta h_2)+(h_1-\alpha h_2)(h_2-\alpha h_3)(h_3-\alpha h_1).$$
\par By the theory of the carrying simplex, we have
\begin{theorem}
\label{thm31}
{\rm (The existence of the heteroclinic cycle)}
Suppose that the condition
$(\tilde{H})$ holds and $h_i>0, i=1,2,3$, then system (\ref{3ML}) admits a heteroclinic cycle
$\Gamma$ ($=\partial \Sigma$) connecting the three axial fixed points $R_1, R_2$ and $R_3$
, where $\partial \Sigma$ stands for the boundary of the carrying simplex $\Sigma$
(see Figure 1).

\end{theorem}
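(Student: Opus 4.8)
The plan is to exhibit the heteroclinic cycle as the boundary $\partial\Sigma$ of the carrying simplex $\Sigma$ produced by Lemma~\ref{lem20}, each of whose three edges is the one-dimensional carrying simplex of a two-species subsystem living in a coordinate plane. First I would observe that every coordinate plane $\{x_i=0\}$ is invariant under the linear map $L$ and under the flow $Q_t$, hence under $S=Q_{\varphi\omega}\circ L$. Restricting $S$ to $\{x_3=0\}$ gives exactly the Poincar\'e map of the two-species May--Leonard model with seasonal succession in $(x_1,x_2)$, and similarly for $(x_2,x_3)$ on $\{x_1=0\}$ and $(x_1,x_3)$ on $\{x_2=0\}$. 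Applying the existence theorem of \cite{NWX2021} (the source of Lemma~\ref{lem20}) in dimension $n=2$, each restricted map carries a one-dimensional carrying simplex, namely an arc joining the two axial fixed points lying in that plane. Since $\Sigma$ is homeomorphic to $\Delta^2$ via radial projection and each $\Sigma\cap\{x_i=0\}$ is compact, invariant and unordered, these three arcs are precisely the edges of $\partial\Sigma$.

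Next I would convert condition $(\tilde H)$ into a competitive-exclusion statement on each edge, reusing the transverse multipliers already computed in Lemmas~\ref{lem22}--\ref{lem24}. In $\{x_3=0\}$ the multipliers of $S$ governing the motion along this edge at $R_1$ and $R_2$ are $e^{h_2-\beta h_1}$ and $e^{h_1-\alpha h_2}$; the inequalities $h_2<\beta h_1$ and $h_1>\alpha h_2$ force the former below $1$ and the latter above $1$, so $R_1$ is attracting and $R_2$ repelling along this edge. Invoking the two-species classification of Hsu and Zhao~\cite{SX2012}, this is precisely the regime in which no coexistence fixed point exists in the open plane and species~$1$ excludes species~$2$; consequently the edge carries no interior equilibrium, the induced dynamics of $S$ on it is monotone, and the edge is a single heteroclinic orbit from $R_2$ to $R_1$. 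The inequalities $h_3<\beta h_2,\ h_2>\alpha h_3$ on $\{x_1=0\}$ and $h_1<\beta h_3,\ h_3>\alpha h_1$ on $\{x_2=0\}$ yield, in the same way, heteroclinic orbits $R_3\to R_2$ and $R_1\to R_3$.

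Concatenating the three edges gives the cycle $R_1\to R_3\to R_2\to R_1$, and since these edges are exactly the arcs comprising $\partial\Sigma$, I would conclude that $\Gamma=\partial\Sigma$ is a heteroclinic cycle connecting $R_1,R_2,R_3$, as claimed.

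I expect the second step to be the crux. The obstacle is that, because $S$ has no closed-form expression, the nonexistence of an interior planar (coexistence) fixed point cannot be verified by solving an explicit algebraic system; it must instead be deduced from the sign pattern of the transverse multipliers together with the monotone structure of the competitive map $S$ restricted to the invariant edge. Making this rigorous is exactly what the Hsu--Zhao dichotomy~\cite{SX2012} for the two-species seasonal model supplies, so the careful point is to check that $(\tilde H)$ places each of the three planar subsystems in the competitive-exclusion branch of that classification rather than in the coexistence or bistable branches.
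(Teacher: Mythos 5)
Your proposal is correct and follows essentially the same route as the paper's own proof: existence of the carrying simplex $\Sigma$ via Lemma~\ref{lem20}, identification of $\partial\Sigma$ with the three invariant arcs in the coordinate planes, exclusion of interior planar fixed points under $(\tilde{H})$ via the two-species classification (the paper cites Niu et al.\ \cite{NWX2021}, Theorem~3.6(ii), which is interchangeable with the Hsu--Zhao dichotomy \cite{SX2012} you invoke), and monotonicity of the restricted one-dimensional dynamics together with the transverse multipliers to orient each edge as a heteroclinic orbit $R_2\to R_1$, $R_3\to R_2$, $R_1\to R_3$. The only cosmetic difference is your explicit remark that each edge is the one-dimensional carrying simplex of the restricted planar subsystem, which the paper leaves implicit.
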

\begin{proof}
By Lemma \ref{lem20}, system (\ref{3ML}) admits a two-dimensional carrying simplex $\Sigma$.
Furthermore, the intersection of $\Sigma$ with three coordinate planes is composed of three line
segments connecting any two of $R_i,i=1,2,3$. These three line segments are written as
$\Gamma_{12},\Gamma_{23}$ and $\Gamma_{13}$, respectively. By the invariance of $\Sigma$, one can obtain that
$\Gamma_{12},\Gamma_{23}$ and $\Gamma_{13}$ are invariant under the map $S$.
In view of $h_2<\beta h_1$ and $ h_1>\alpha h_2$, it follows from Niu et al. \cite[Theorem 3.6(ii)]{NWX2021} that
there are no interior fixed points on $\Gamma_{12}$.
Note that the restriction of $S$ to the segment $\Gamma_{12}$ must be a monotone one-dimensional map,
all orbits on $\Gamma_{12}$ have one fixed point as the $\alpha$-limit set and the other as the $\omega$-limit
set. Meanwhile,
Niu et al. \cite[Theorem 3.6(ii)]{NWX2021} ensures that all orbits on $\Gamma_{12}$ take $R_2$ as the $\alpha$-limit set
and $R_1$ as the $\omega$-limit set under the condition $(\tilde{H})$. By the cyclic symmetry, we also
obtain that $\Gamma_{23}$ and $\Gamma_{13}$ have similar properties
as $\Gamma_{12}$ under the condition $(\tilde{H})$. Define
$\Gamma=\Gamma_{12}\cup \Gamma_{13} \cup \Gamma_{23}$, then $\Gamma= \partial \Sigma$. Furthermore, it is obvious that
$\Gamma$ is a heteroclinic cycle connecting the three axial fixed points
$R_1,R_2$ and $R_3$, which forms the boundary  $\partial \Sigma$
of $\Sigma$ (see Figure 1). We have thus proved the theorem.
\end{proof}

\begin{figure}[htbp]
\centering
\begin{minipage}[t]{0.48\textwidth}
\centering
\includegraphics[width=6cm]{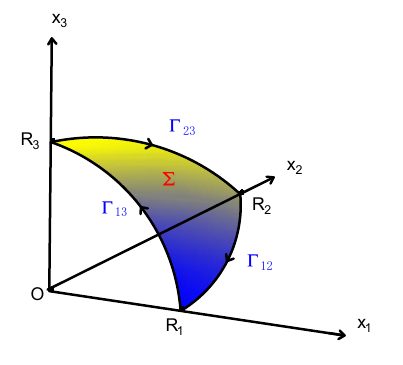}
\caption{\footnotesize  $\Gamma$ (Clockwise direction)}
\end{minipage}
\begin{minipage}[t]{0.48\textwidth}
\centering
\includegraphics[width=6cm]{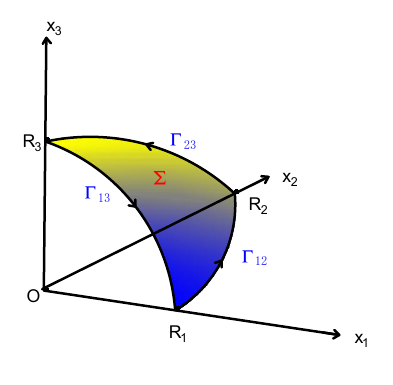}
\caption{\footnotesize  $\Gamma$ (Counter-clockwise direction)}
\end{minipage}
\end{figure}

\begin{remark}
\label{rmk31}
By modifying the proof of Theorem \ref{thm31}, it can easily be shown that
if the condition $(\tilde{H})$ is revised as
$$h_2>\beta h_1, h_1<\alpha h_2;\ h_3>\beta h_2, h_2<\alpha h_3;\ h_1>\beta h_3, h_3<\alpha h_1,$$
then system (\ref{3ML}) also admits a heteroclinic cycle connecting the three axial fixed points
$R_1,R_2$ and $R_3$, yet the arrows are reserved (see Figure 2).
\end{remark}

\begin{theorem}
{\rm (The stability of the heteroclinic cycle)}
\label{thm32}
Given system (\ref{3ML}), suppose that the condition $(\tilde{H})$ holds and $h_i>0,i=1,2,3$, then
\begin{enumerate}
\item [(i)]If $\vartheta >0$, then
the heteroclinic cycle $\Gamma$ repels.
\item [(ii)]If $\vartheta <0$, then the heteroclinic cycle
$\Gamma$ attracts.
\end{enumerate}
\end{theorem}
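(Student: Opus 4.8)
The plan is to reduce the question to the intrinsic dynamics of $S$ on the two–dimensional carrying simplex $\Sigma$ and then to study a Poincar\'e first–return map running once around the cycle. By Lemma \ref{lem20} the simplex attracts every nontrivial orbit, and the computation in Lemma \ref{lem22} shows that at each axial fixed point the radial eigenvalue is $e^{-h_i}<1$; this is precisely the eigenvalue in the direction transverse to $\Sigma$, so the normal direction is uniformly contracting. Hence the first thing I would record is a reduction lemma: since orbits off $\Sigma$ are drawn to $\Sigma$ while their shadows on $\Sigma$ control the behaviour near $\Gamma=\partial\Sigma$, the statements ``$\Gamma$ attracts'' and ``$\Gamma$ repels'' for $S$ on $\mathbb{R}^3_+$ are equivalent to the corresponding statements for $S|_{\Sigma}$.

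On $\Sigma$, Theorem \ref{thm31} realizes $\Gamma$ as the cyclic union $R_1\to R_3\to R_2\to R_1$, in which each edge $\Gamma_{ij}$ is simultaneously the unstable manifold of one endpoint and the stable manifold of the other. Reading the two tangential eigenvalues off Lemmas \ref{lem22}--\ref{lem24}, under $(\tilde{H})$ every $R_i$ is a hyperbolic saddle of $S|_{\Sigma}$ whose incoming edge carries a contracting eigenvalue and whose outgoing edge an expanding one. Writing the contraction and expansion rates as minus the logarithms of these eigenvalues, I would record
\begin{align*}
\text{at } R_1:\quad & c_1=\beta h_1-h_2,\quad e_1=h_3-\alpha h_1;\\
\text{at } R_3:\quad & c_3=\beta h_3-h_1,\quad e_3=h_2-\alpha h_3;\\
\text{at } R_2:\quad & c_2=\beta h_2-h_3,\quad e_2=h_1-\alpha h_2,
\end{align*}
all strictly positive under $(\tilde{H})$.

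The core step is the construction of the return map. Near each saddle I would use the smoothness of $S=Q_{\varphi\omega}\circ L$ together with the hyperbolicity of $DS(R_i)$ to show that a point entering a small box at transverse distance $d$ from the incoming edge leaves through a cross–section transverse to the outgoing edge at distance comparable to $d^{\rho_i}$, where $\rho_i=c_i/e_i$ is the saddle quantity: iterating the linear part, a point needs about $\frac{1}{e_i}\log(\delta/d)$ steps to cross a box of size $\delta$, during which the stable coordinate shrinks by the factor $d^{\rho_i}\delta^{1-\rho_i}$. The global transition maps along the interiors of the edges $\Gamma_{ij}$ are diffeomorphisms, hence bi-Lipschitz near the cycle and order–of–magnitude preserving. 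Composing the three local passage maps with the three transitions gives a one–dimensional return map $P$ on an interior transversal to $\Gamma$ with $P(d)=C\,d^{\rho_1\rho_2\rho_3}\bigl(1+o(1)\bigr)$ as $d\to 0$.

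Finally I would identify the exponent with $\vartheta$. Since $\rho_1\rho_2\rho_3=\dfrac{c_1c_2c_3}{e_1e_2e_3}$ and, by factoring the displayed rates, $\vartheta=(h_1-\beta h_3)(h_2-\beta h_1)(h_3-\beta h_2)+(h_1-\alpha h_2)(h_2-\alpha h_3)(h_3-\alpha h_1)=e_1e_2e_3-c_1c_2c_3$, the exponent exceeds $1$ exactly when $\vartheta<0$ and is smaller than $1$ exactly when $\vartheta>0$; for small $d$ this makes the fixed point $d=0$ attracting in case (ii) and repelling in case (i), which is the theorem. The main obstacle is making the power–law passage estimate rigorous for the implicitly defined map $S$, which has no closed form: I would either invoke a $C^1$–linearization theorem at each hyperbolic fixed point, or, to sidestep possible resonances, replace the rates $c_i,e_i$ by $c_i\pm\varepsilon$ and $e_i\mp\varepsilon$ in matching upper and lower estimates of the passage map and let $\varepsilon\to0$, which determines the sign of $\log(\rho_1\rho_2\rho_3)$ and hence the stability in every case except the borderline $\vartheta=0$, which is not claimed.
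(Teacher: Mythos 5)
Your proposal is mathematically sound and reaches the right criterion, but it takes a genuinely different route from the paper. The paper's proof is essentially a verification-and-citation argument: it reads the six relevant eigenvalues $e^{h_j-\beta h_i}$, $e^{h_j-\alpha h_i}$ off Lemmas \ref{lem22}--\ref{lem24}, checks that $S$ can be written in Kolmogorov form $S_i(x)=x_iG_i(x)$ so that it falls within the framework of competitive maps admitting a carrying simplex, observes that the sum of the two triple products of logarithms of these eigenvalues is exactly $\vartheta$, and then invokes Theorem 3 of Jiang--Niu--Wang \cite{JNW2016} as a black box to conclude. You instead re-derive that black-box theorem from first principles via a Shilnikov-type first-return map: your bookkeeping of contraction/expansion rates $c_i,e_i$ at each saddle is correct (the cycle direction $R_1\to R_3\to R_2\to R_1$ matches Theorem \ref{thm31}), the identity $\vartheta=e_1e_2e_3-c_1c_2c_3$ is exactly the factorization the paper performs, and the criterion $\rho_1\rho_2\rho_3\gtrless 1$ is the content of the cited theorem. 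What the paper's route buys is brevity and the avoidance of the two delicate points you yourself flag: (a) the power-law passage estimate for a map defined only implicitly (where your $\varepsilon$-perturbed upper/lower bounds, or Hartman's $C^1$ planar linearization, would indeed close the gap, but at the cost of a page or two of careful estimates); and (b) the reduction to $\Sigma$, where your claimed equivalence is too strong as stated --- normal contraction is only computed at the three fixed points, and in the repelling case points on the coordinate planes near an edge still converge to $\Gamma$, so ``repels'' can only hold relative to $\Sigma$ (or the interior), which is how the cited theorem is formulated. What your route buys is a self-contained proof that makes transparent \emph{why} $\vartheta$ is the right quantity, namely as $\log$ of the product of the saddle indices, rather than leaving it as an unexplained algebraic coincidence.
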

\begin{proof}
By the proof of
Lemma \ref{lem22} and Lemma \ref{lem23}-\ref{lem24}, one can see that the eigenvalues of the
matrices $DS(R_1)$ and $DS(R_3)$ along
the $x_2$-axis direction are $e^{h_2-\beta h_1}$ and $e^{h_2-\alpha h_3}$, respectively;
 the eigenvalues of the
matrices $DS(R_1)$ and $DS(R_2)$ along
the $x_3$-axis direction are $e^{h_3-\alpha h_1}$ and $e^{h_3-\beta h_2}$, respectively;
 the eigenvalues of the
matrices $DS(R_2)$ and $DS(R_3)$ along
the $x_1$-axis direction are $e^{h_1-\alpha h_2}$ and $e^{h_1-\beta h_3}$, respectively.
A simple calculation yields
$$\ln e^{h_1-\beta h_3}\cdot \ln e^{h_2-\beta h_1}\cdot \ln e^{h_3-\beta h_2}+ \ln e^{h_1-\alpha h_2}\cdot
\ln e^{h_2-\alpha h_3} \cdot \ln e^{h_3-\alpha h_1}  $$
$$=(h_1-\beta h_3)(h_2-\beta h_1)(h_3-\beta h_2)+(h_1-\alpha h_2)(h_2-\alpha h_3)(h_3-\alpha h_1)=\vartheta.$$
Note that system (\ref{3ML}) is a time-periodic Kolmogorov systems and
the the Poincar$\rm \acute{e}$ map $S$ is $C^1$-diffeomorphism onto its image
(see the proof of Theorem 2.3 in Niu et al. \cite{NWX2021}), we can rewrite $S$ as:
$$S(x_1,x_2,x_3)=(x_1G_1(x),x_2G_2(x),x_3G_3(x)), \quad   x\in \mathbb{R}_+^3,$$
where $G_i(x)=$ $\left\{
\begin{array}{ll}
\displaystyle \frac{S_i(x)}{x_i},  \quad x_i \neq 0, \\
\displaystyle \frac{\partial S_i}{\partial x_i}(x),\quad x_i=0.\\
\end{array}\right.$
Then $S$ is considered as a Kolmogorov competitive map. According to Theorem 3 in Jiang, Niu and Wang \cite{JNW2016},
we can obtain that if
$\vartheta>0$, then the heteroclinic cycle $\Gamma$ repels; if
$\vartheta<0$, then the heteroclinic cycle $\Gamma$ attracts. Thus, we have completed the proof.
\end{proof}

\par In particular, when $\lambda_1=\lambda_2=\lambda_3$, we have
\begin{corollary}
\label{thm33}
Given system (\ref{3ML}), suppose that the condition $(\tilde{H})$ holds,
$h_i>0 \ (i=1,2,3)$, $\lambda_1=\lambda_2=\lambda_3=\lambda$ and $0<\alpha<1<\beta$
, then
\begin{enumerate}
\item [(i)] If $\alpha+\beta<2$, then the heteroclinic cycle $\Gamma$ repels.
\item [(ii)] If $\alpha+\beta>2$, then the heteroclinic cycle $\Gamma$ attracts.
\item [(iii)] If $\alpha+\beta=2$, then the carrying simplex of system (\ref{3ML}) is
$$\Sigma=\{(x_1,x_2,x_3)\in \mathbb{R}_+^3: x_1+x_2+x_3=\frac{1-e^{p-q}}{1-e^{-q}}\},$$
where $p=\lambda(1-\varphi)\omega, \ q=\varphi\omega$. Furthermore,
$P:=\frac{1-e^{p-q}}{3(1-e^{-q})}(1,1,1)$ is a positive fixed point of
$S$.
\end{enumerate}
\end{corollary}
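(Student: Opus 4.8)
The plan is to separate the argument: parts (i)--(ii) reduce to a sign computation for $\vartheta$ and an appeal to Theorem \ref{thm32}, while part (iii) requires exhibiting the simplex explicitly.

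\textbf{Parts (i) and (ii).} Since $\lambda_1=\lambda_2=\lambda_3=\lambda$, the three numbers $h_i=(\varphi-\lambda(1-\varphi))\omega$ coincide; write $h:=h_1=h_2=h_3>0$. Substituting into the definition of $\vartheta$ collapses both triple products into perfect cubes, so that $\vartheta=h^3\big[(1-\beta)^3+(1-\alpha)^3\big]$. Applying the factorization $a^3+b^3=(a+b)(a^2-ab+b^2)$ with $a=1-\alpha$ and $b=1-\beta$, the quadratic factor is strictly positive, whence $\mathrm{sgn}(\vartheta)=\mathrm{sgn}(2-\alpha-\beta)$. Thus $\alpha+\beta<2$ gives $\vartheta>0$ and (i) follows from Theorem \ref{thm32}(i), while $\alpha+\beta>2$ gives $\vartheta<0$ and (ii) follows from Theorem \ref{thm32}(ii). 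This step is routine.

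\textbf{Part (iii): decoupling of the total population.} The central observation is that when $\alpha+\beta=2$ the sum $s:=x_1+x_2+x_3$ decouples. Summing the three May--Leonard equations, the quadratic part becomes $\sum_i x_i^2+(\alpha+\beta)\sum_{i<j}x_ix_j=(x_1+x_2+x_3)^2=s^2$, so along the flow $Q_t$ one has $\dot s=s(1-s)$, while along the decay flow $\dot s=-\lambda s$. Hence $S=Q_{\varphi\omega}\circ L$ descends to a scalar map $g$ in the $s$-coordinate: first the factor $e^{-p}$ coming from $L$, then the time-$q$ logistic flow $\phi_q(\sigma)=\sigma e^{q}/(1+\sigma(e^{q}-1))$, where $p=\lambda(1-\varphi)\omega$ and $q=\varphi\omega$. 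Solving $g(s)=s$ for $s\neq0$ yields the unique positive fixed point $s^{*}=(1-e^{p-q})/(1-e^{-q})$, which is positive precisely because $h>0$ forces $q>p$. Consequently the plane section $M:=\{x\in\mathbb{R}_+^3:x_1+x_2+x_3=s^{*}\}$ is invariant under $S$.

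\textbf{Identifying $M$ with the carrying simplex and locating $P$.} The scalar map $g$ is increasing on $(0,\infty)$ with $0$ and $s^{*}$ its only nonnegative fixed points, so $g^{n}(\sigma)\to s^{*}$ for every $\sigma>0$, i.e. $d(S^{n}x,M)\to0$ for all nontrivial $x$. To upgrade this to $\Sigma=M$, note that the carrying simplex $\Sigma$ of Lemma \ref{lem20} is compact and invariant, so its image under $s$ is a compact interval fixed by $g$; monotonicity of $g$ and uniqueness of its positive fixed point force $s\equiv s^{*}$ on $\Sigma$, giving $\Sigma\subseteq M$. Since both $\Sigma$ and $M$ meet each positive ray in exactly one point (both are radial graphs over $\Delta^{2}$), the inclusion is an equality, $\Sigma=M$. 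Finally, the cyclic symmetry of the model together with the common decay rate makes the diagonal ray $\{t(1,1,1):t>0\}$ invariant under $S$; on it $s=3t$ obeys the same scalar dynamics $g$, so the fixed value $s=s^{*}$ corresponds to $t=s^{*}/3$, which gives $P=\tfrac{s^{*}}{3}(1,1,1)=\tfrac{1-e^{p-q}}{3(1-e^{-q})}(1,1,1)$.

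The hard part will be the identification $\Sigma=M$: one must first establish global convergence of the scalar map $g$ and then invoke the radial-graph structure (or uniqueness of the carrying simplex) to promote ``$M$ attracts every nontrivial orbit'' into ``$M$ equals $\Sigma$'', rather than merely containing all $\omega$-limit sets.
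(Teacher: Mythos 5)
Your proposal is correct and follows essentially the same route as the paper: the sign of $\vartheta=h^{3}\bigl[(1-\alpha)^{3}+(1-\beta)^{3}\bigr]$ together with Theorem \ref{thm32} for (i)--(ii), the decoupled logistic dynamics of $x_1+x_2+x_3$ (using $\alpha+\beta=2$) to produce the invariant plane with level $s^{*}=(1-e^{p-q})/(1-e^{-q})$, and invariance of the diagonal ray to locate the fixed point $P$. Your argument that the compact interval $s(\Sigma)$ is fixed by the increasing scalar map $g$ and hence collapses to $\{s^{*}\}$, combined with the radial-graph property to upgrade $\Sigma\subseteq M$ to equality, is in fact a more careful justification of the step the paper dispatches with the single phrase ``using the invariance of the carrying simplex,'' so it is a refinement of the same approach rather than a deviation.
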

\begin{proof}
Since $\lambda_1=\lambda_2=\lambda_3$, it follows that $h_1=h_2=h_3$, and then,
\begin{align*}
\vartheta &=h_1^3\big((1-\beta)^3+(1-\alpha)^3\big)\\
&=h_1^3\Big(\big(2-(\alpha+\beta)\big)\big( (1-\beta)^2 +(\beta-1)(1-\alpha)+(1-\alpha)^2  \big)\Big).
\end{align*}
Noticing that $0<\alpha<1<\beta,$ one can obtain that $\big( (1-\beta)^2 +(\beta-1)(1-\alpha)+(1-\alpha)^2  \big)>0.$
By $h_i>0\ (i=1,2,3)$, it is clear that if $\alpha+\beta<2$, then
$\vartheta >0$; if $\alpha+\beta>2$, then
$\vartheta <0$. This implies that the statements (i) and (ii) are true due to Theorem \ref{thm32}.
\par Finally, it remains to show that the statement (iii) holds. For this purpose, we firstly consider the one-dimensional case of system (\ref{3ML}),
that is,
\begin{equation}
\label{3ML-1}
\begin{cases}
\frac{dx}{dt}=-\lambda x,\qquad \quad \; m \omega \leq t\leq m \omega+(1-\varphi) \omega,\\
\frac{dx}{dt}=x(1-x),\quad m \omega+(1-\varphi) \omega \leq t\leq (m+1)\omega,\\
x(0)=x_0 \in \mathbb{R}_+.
\end{cases}
\end{equation}
Solving the equation $\frac{dx}{dt}=x(1-x)$ with the initial value $x^0 \in \mathbb{R}_+$ yields
\begin{equation}
\label{3ML-2}
x(t,x^0)=\frac{x^0}{x^0+(1-x^0)e^{-t}}, \qquad \forall \ t\geq 0,\ x^0 \in \mathbb{R}_+.
\end{equation}
Define the Poincar$\rm \acute{e}$ map of system (\ref{3ML-1}),
$$\bar{S}(x_0)=x(\omega,x_0), \ \forall \ x_0 \in \mathbb{R}_+.$$
By Hsu and Zhao\cite[Lemma 2.1]{SX2012}, $\bar{S}$ has a unique positive fixed point $x^*$
as $\varphi-\lambda(1-\varphi)>0$.
Based on the expression (\ref{3ML-2}) and a straightforward computation, it then follows that
\begin{equation}
\label{3ML-3}
x^*=\frac{1-e^{\lambda(1-\varphi)\omega-\varphi\omega}}{1-e^{-\varphi\omega}}=\frac{1-e^{p-q}}{1-e^{-q}}.
\end{equation}
\par Since $h_i>0\ (i=1,2,3)$, the Poincar$\rm \acute{e}$ map $S$ generated by system (\ref{3ML})
has three axial fixed points $R_1, R_2$ and $R_3$.
Note that $\lambda_1=\lambda_2=\lambda_3=\lambda$ and (\ref{3ML-3}), it follows that
$$R_1=(\frac{1-e^{p-q}}{1-e^{-q}},0,0),\ \ R_2=(0,\frac{1-e^{p-q}}{1-e^{-q}},0),\ \ R_3=(0,0,\frac{1-e^{p-q}}{1-e^{-q}}).$$
Then, the intersection of $\mathbb{R}_+^3$ with
the planar including three axial fixed points $R_1$,$R_2$ and $R_3$ is
$$\pi:=\{(x_1,x_2,x_3)\in \mathbb{R}_+^3: x_1+x_2+x_3=\frac{1-e^{p-q}}{1-e^{-q}}\}.$$
For any $\bar{x}=(a,b,c)$ $\in \pi$, we have $a+b+c=\frac{1-e^{p-q}}{1-e^{-q}},$
and
$$L\bar{x}=(e^{-\lambda(1-\varphi)\omega}a,e^{-\lambda(1-\varphi)\omega}b,e^{-\lambda(1-\varphi)\omega}c)
=(e^{-p}a,e^{-p}b,e^{-p}c).$$
Let $Q_t(x):=\big(x_1(t,x),x_2(t,x),x_3(t,x)\big)$ represent the solution flow associated with system
(\ref{ML}), then
$$Q_t(L\bar{x})=\big(x_1(t,L\bar{x}),x_2(t,L\bar{x}),x_3(t,L\bar{x})\big),$$
and hence, $$S(\bar{x})=Q_{\varphi\omega}(L\bar{x})=\big(x_1(\varphi\omega,L\bar{x}),x_2(\varphi\omega,L\bar{x}),x_3(\varphi\omega,L\bar{x})\big).$$
\par Next, we will show that $S(\bar{x})\in \pi$. For system (\ref{ML}), we first denote
$$ Y(t)=x_1(t)+x_2(t)+x_3(t),$$ and then, $$\frac{dY(t)}{dt}=Y(t)(1-Y(t)).$$
Solving the above equation with the initial value $Y(0)\in \mathbb{R}_+^3$ gives
$$Y(t)=\frac{Y(0)}{Y(0)+(1-Y(0))e^{-t}}, \qquad \quad \forall \ t\geq 0,\ Y(0)\in \mathbb{R}_+^3.$$
After a easy calculation, we have $$Y(0)=x_1(0,L\bar{x})+x_2(0,L\bar{x})+x_3(0,L\bar{x})=e^{-p}a+e^{-p}b+e^{-p}c=e^{-p}(a+b+c).$$
Taking $t=\varphi\omega$,
it follows that
$$x_1(\varphi\omega, L\bar{x})+x_2(\varphi\omega, L\bar{x})+x_3(\varphi\omega, L\bar{x})
=Y(\varphi\omega)=\frac{Y(0)}{Y(0)+(1-Y(0))e^{-\varphi\omega}}.$$
Then,
$$Y(\varphi\omega)=\frac{e^{-p}(a+b+c)}{e^{-p}(e^{-p}(a+b+c))
+e^{-q}-e^{-p-q}(e^{-p}(a+b+c))}.$$ Substituting
$a+b+c=\frac{1-e^{p-q}}{1-e^{-q}}$ into the above equation yields
$$x_1(\varphi\omega, L\bar{x})+x_2(\varphi\omega, L\bar{x})+x_3(\varphi\omega, L\bar{x})=Y(\varphi\omega)=\frac{1-e^{p-q}}{1-e^{-q}},$$
which implies that $S(\bar{x})\in \pi$.
By the arbitrariness of $\bar{x}$, one can see that $\pi$ is invariant under the map
$S$. Using the invariance of the carrying simplex $\Sigma$, we further obtain
that $\pi$ is the carrying simplex $\Sigma$ of system (\ref{3ML}), that is,
$$\Sigma=\pi=\{(x_1,x_2,x_3)\in \mathbb{R}_+^3: x_1+x_2+x_3=\frac{1-e^{p-q}}{1-e^{-q}}\}.$$
\par In particular, when $\varphi=1$, the carrying simplex $\Sigma'$ of the classical May-Leonard competition model
(\ref{ML}) is
$$\Sigma'=\{(x_1,x_2,x_3)\in \mathbb{R}_+^3: x_1+x_2+x_3=1\}.$$
Under the condition that $0<\alpha<1<\beta$ and $\alpha+\beta=2$,
system (\ref{ML}) admits a unique positive equilibria $P':=(\frac{1}{3},\frac{1}{3},\frac{1}{3})$. Then
the ray equation connecting $O$ and $P'$ is $$l=\{(\frac{1}{3}t,\frac{1}{3}t,\frac{1}{3}t)| t\geq 0 \}.$$ Next,
we prove that
$l$ is invariant under system (\ref{ML}). Clearly, $Q_t(O)=O$, where $\{Q_t\}_{t\geq 0}$ stands for the solution flow
associated with system (\ref{ML}). For any
$x_0 \in l\setminus \{O\}$, there exists $\xi>0$ such that $x_0=(\frac{1}{3}\xi,\frac{1}{3}\xi,\frac{1}{3}\xi).$ Let $\phi(t)$
be the unique solution of the following system
\begin{equation}
\nonumber
\begin{cases}
\frac{d\phi(t)}{dt}=\phi(t)(1-\phi(t)),\\
 \phi(0)=\xi.
\end{cases}
\end{equation}
It can easily be verified that $(\frac{1}{3}\phi(t),\frac{1}{3}\phi(t),\frac{1}{3}\phi(t))$ is
the unique solution of system (\ref{ML}) with initial value
$x_0=(\frac{1}{3}\xi,\frac{1}{3}\xi,\frac{1}{3}\xi)$. Based on the fact that
$(\frac{1}{3}\phi(t),\frac{1}{3}\phi(t),\frac{1}{3}\phi(t))\in l\setminus \{O\}$,
it follows that $l$ is invariant under system (\ref{ML}).
\par On the other hand, there admits a unique intersection point $P=\frac{1-e^{p-q}}{3(1-e^{-q})}(1,1,1)$
between $l$ and the carrying simplex $\Sigma$ of system (\ref{3ML}), that is,
$l \cap \Sigma =\{P\}.$ Recall that the expression
$L(x_1,x_2,x_3)=e^{-p}(x_1,x_2,x_3),$ we can obtain that $LP \in l$. Again, $l$ is invariant
under the solution flow $\{ Q_t \}_{t\geq 0}$ of system
(\ref{ML}), we have $Q_t(LP)\in l$. Taking $t=\varphi\omega,$ it follows that
$S(P)=Q_{\varphi \omega}(LP) \in l.$ By the invariance of the carrying simplex, we also get
$S(P)\in \Sigma$, and then, $S(P)\in l \cap \Sigma$. Consequently, $S(P)=P$, which implies that
$P$ is a positive fixed point of $S$. We have completed the proof.
\end{proof}
\begin{remark}
From Corollary \ref{thm33}, one can see that system (\ref{3ML}) inherits stability properties of the heteroclinic cycle of the
classical May-Leonard competition model (\ref{ML}) in the case that
$\lambda_1=\lambda_2=\lambda_3$ and $\alpha+\beta \neq 2$. This means that
the stability of the heteroclinic cycle has not been changed by
the introduction of seasonal succession.
\end{remark}

\section{Bifurcation and Numerical simulation}
There are five parameters $\varphi, \omega, \lambda_i \ (i=1,2,3) $ related to the seasonal succession.
By Theorem \ref{thm31}-\ref{thm32},
it is not difficult to see that the period $\omega$ is independent of the existence and stability of the heteroclinic cycle.
 Hsu and Zhao\cite[Lemma 2.1]{SX2012} states that if $h_i\leq 0, i=1,2,3$ (i.e., $\lambda_i \geq
\frac{\varphi}{1-\varphi}$), then there does not exist
axial fixed points for system (\ref{3ML}), which implies that there is no heteroclinic cycles connecting three axial fixed points.
Here, we select $\varphi$ as the switching strategy parameter for
a given example to analyze the stability of the heteroclinic cycle.
By numerical simulation, rich dynamics (including
nontrivial periodic solutions, invariant closed curves and heteroclinic cycles) for such a system are exhibited.
\par For system (\ref{ML}), when taking $\alpha=0.4, \beta=1.61$, it follows from May and Leonard\cite{ML1975}
(or Chi, Hsu and Wu \cite{CHW1998}) that
the heteroclinic cycle of the system is attracting. In what follows, for the same parameter values
$\alpha=0.4$ and $\beta=1.61$, we also take
$\lambda_1=0.2, \lambda_2=0.1, \lambda_3=0.4,$
$\omega=10$ and initial values $x_1(0)=0.5, x_2(0)=0.6, x_3(0)=0.4$, and then, the concrete form is given by
\begin{equation}
\begin{cases}
\label{3ML2}
\frac{dx_1}{dt}=-0.2 x_1,\qquad \quad \qquad \qquad \qquad \ \ 10m  \leq t\leq 10m+10(1-\varphi), \\
\frac{dx_1}{dt}=x_1(1-x_1-0.4  x_2-1.61 x_3), \ \ 10m +10(1-\varphi) \leq t\leq 10(m+1),\\
\frac{dx_2}{dt}=-0.1 x_2,\qquad \quad \qquad \qquad \qquad\ \ 10m \leq t\leq 10m+10(1-\varphi), \\
\frac{dx_2}{dt}=x_2(1-1.61 x_1-x_2-0.4 x_3),\ \ 10m+10(1-\varphi) \leq t\leq 10(m+1),\\
\frac{dx_3}{dt}=-0.4 x_3,\qquad \quad \qquad  \qquad \qquad\ \ 10m \leq t\leq 10m+10(1-\varphi), \\
 \frac{dx_3}{dt}=x_3(1-0.4 x_1-1.61 x_2- x_3),  \ \ 10m +10(1-\varphi) \leq t\leq 10(m+1),\\
\big(0.5,0.6,0.4\big)=x^0 \in \mathbb{R}^3_+, \quad \qquad \qquad \qquad \ m=0,1,2\dots,\\
\end{cases}
\end{equation}
where $\varphi \in [0,1]$. Recall that $h_i>0 \ (i=1,2,3)$ implies that system (\ref{3ML2}) has three interior fixed points
of $i$-axis, written as $\tilde{R}_i \ (i=1,2,3)$. Then, we have
\begin{theorem}
\label{thm41}
Let $\tilde{S}$ be the Poincar$\acute{ e}$ map
induced by system (\ref{3ML2}),
then
\begin{enumerate}
\item [(i)] If $\varphi \in [\frac{4.44}{10.54},1]$, then system (\ref{3ML2}) admits a
heteroclinic cycle $\tilde{\Gamma}$ connecting three axial fixed points $\tilde{R}_i \ (i=1,2,3)$.
\item [(ii)] If $\varphi \in [0,$ $\frac{4.44}{10.54})$, then
system (\ref{3ML2}) does not admit a
heteroclinic cycle connecting three axial fixed points $\tilde{R}_i \ (i=1,2,3)$.
\end{enumerate}
\end{theorem}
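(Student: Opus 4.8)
The plan is to collapse both hypotheses of Theorem \ref{thm31} into explicit linear inequalities in the single parameter $\varphi$. Substituting $\lambda_1=0.2$, $\lambda_2=0.1$, $\lambda_3=0.4$, $\omega=10$ into $h_i=(\varphi-\lambda_i(1-\varphi))\omega$ yields the affine forms $h_1=12\varphi-2$, $h_2=11\varphi-1$, $h_3=14\varphi-4$. First I would record the positivity requirement: $h_i>0$ for all $i$ amounts to $\varphi>2/7$, the binding threshold coming from $h_3$. Then, with $\alpha=0.4$ and $\beta=1.61$, each of the six inequalities in condition $(\tilde H)$ becomes a one-line comparison. Solving them individually, the pair $h_3<\beta h_2$ and $h_2>\alpha h_3$ holds for every $\varphi>0$; the comparisons $h_2<\beta h_1$, $h_1>\alpha h_2$, $h_3>\alpha h_1$ give the milder thresholds $\varphi>0.2668,\,0.2105,\,0.3478$; and the genuinely binding one, $h_1<\beta h_3$, rearranges to $4.44<10.54\,\varphi$. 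Hence the full hypothesis of Theorem \ref{thm31} is met exactly on $(\tfrac{4.44}{10.54},1]$, and for this open range part (i) follows immediately by invoking Theorem \ref{thm31}.

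The delicate point—and the step I expect to be the main obstacle—is the left endpoint $\varphi=\tfrac{4.44}{10.54}$, where the binding inequality degenerates to the equality $h_1=\beta h_3$ and Theorem \ref{thm31} no longer applies: by Lemma \ref{lem24} the eigenvalue $e^{h_1-\beta h_3}$ of $D\tilde S(\tilde R_3)$ in the $x_1$-direction then equals $1$, so $\tilde R_3$ is merely neutral along the boundary edge $\tilde\Gamma_{13}\subset\partial\Sigma$ (I write $\tilde\Gamma_{12},\tilde\Gamma_{23},\tilde\Gamma_{13}$ for the edges, the analogues of the $\Gamma_{ij}$ in Theorem \ref{thm31}). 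To dispose of this case I would argue directly on that invariant edge: the restriction of $\tilde S$ to $\tilde\Gamma_{13}$ is a monotone one-dimensional map; its endpoint $\tilde R_1$ remains a genuine repeller, since $h_3>\alpha h_1$ holds strictly at this $\varphi$; and no fixed point sits in the open edge, because the interior coexistence point present on $\tilde\Gamma_{13}$ for slightly smaller $\varphi$ has merged into $\tilde R_3$ exactly at $h_1=\beta h_3$. Thus every orbit on $\tilde\Gamma_{13}$ still runs from $\tilde R_1$ to $\tilde R_3$, the remaining connections $\tilde R_3\to\tilde R_2$ and $\tilde R_2\to\tilde R_1$ are governed by strict inequalities and are unchanged, and the heteroclinic cycle $\tilde\Gamma$ survives, completing part (i).

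For part (ii) I would split $[0,\tfrac{4.44}{10.54})$ at $\varphi=2/7$. On $[0,2/7]$ we have $h_3\le 0$, so by Hsu and Zhao \cite[Lemma 2.1]{SX2012} the axial fixed point $\tilde R_3$ fails to exist and no cycle through all three axial points is possible. On $(2/7,\tfrac{4.44}{10.54})$ all $h_i>0$ but the binding inequality is reversed, $h_1>\beta h_3$, so $\tilde R_3$ repels along $\tilde\Gamma_{13}$. Because $\varphi>2/7>0.2668$ forces $h_2<\beta h_1$ and $h_1>\alpha h_2$, the directions on the other two edges are pinned down as $\tilde R_2\to\tilde R_1$ and $\tilde R_3\to\tilde R_2$; a heteroclinic cycle would then require the connection $\tilde R_1\to\tilde R_3$ on $\tilde\Gamma_{13}$, which is impossible while $\tilde R_3$ repels. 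Concretely, either $h_3<\alpha h_1$, so that $\tilde R_1$ is a sink on $\partial\Sigma$ and species $1$ excludes the others, or $h_3>\alpha h_1$, so that both endpoints of $\tilde\Gamma_{13}$ repel and a stable interior coexistence fixed point appears on $\tilde\Gamma_{13}$; neither configuration supports a cycle. Finally, the reversed-orientation cycle of Remark \ref{rmk31} is excluded as well, since its hypothesis $h_2>\beta h_1$ demands $\varphi<0.2668<2/7$. This settles part (ii).
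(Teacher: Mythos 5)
Your proposal is correct and follows essentially the same route as the paper: reduce $(\tilde H)$ and $h_i>0$ to linear inequalities in $\varphi$, identify $\frac{4.44}{10.54}$ as the binding threshold, invoke Theorem \ref{thm31} on the open range, treat the degenerate endpoint $h_1=\beta h_3$ via the monotone one-dimensional dynamics on the invariant edge $\tilde\Gamma_{13}$ with $\tilde R_1$ still repelling, and rule out cycles below the threshold by the orientations on the edges. The only notable difference is that at the endpoint the paper justifies the absence of an interior fixed point in the plane $\{x_2=0\}$ by citing Hsu and Zhao \cite[Lemma 2.5(i)]{SX2012}, whereas you rely on an informal ``merging'' argument that would need that citation (or an explicit computation) to be fully rigorous.
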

\begin{proof}
Noticing that $h_i=(\varphi-\lambda_i(1-\varphi))\omega \ (i=1,2,3)$, an easy calculation yields
$$h_1=12\varphi-2, \ h_2=11\varphi-1 \ \ {\rm and} \ \ h_3=14\varphi-4.$$
In addition,
$$h_2-\beta h_1=2.22-8.32\varphi,\ h_1-\alpha h_2=7.6\varphi-1.6,\ h_3-\beta h_2=-3.71\varphi-2.39<0$$
and
$$h_2-\alpha h_3=5.4\varphi+0.6>0 ,\ h_1-\beta h_3=-10.54\varphi+4.44, \ h_3-\alpha h_1=9.2\varphi-3.2 .$$
It then easily follows that
\begin{enumerate}
\item [(1)] $\varphi>\frac{2}{7}$  if and only if $h_i>0 \ (i=1,2,3)$;
\item [(2)] $\varphi>\frac{4.44}{10.54}$ if and only if the condition $(\tilde{H})$ holds.
\end{enumerate}
This implies that if $\varphi>\frac{4.44}{10.54}$, then system (\ref{3ML2}) satisfies $(\tilde{H})$ and $h_i>0,i=1,2,3$.
Meanwhile, system (\ref{3ML2}) has three axial fixed points $\tilde{R}_i \ (i=1,2,3)$.
By Theorem \ref{thm31}, we can see that
if $\varphi>\frac{4.44}{10.54}$, then system (\ref{3ML2}) admits a
heteroclinic cycle $\tilde{\Gamma}$ connecting three axial fixed points $\tilde{R}_i \ (i=1,2,3)$.
\par When $\varphi=\frac{4.44}{10.54}$, one can calculate that
$$h_2-\beta h_1<0,\ h_1-\alpha h_2>0,\ h_3-\beta h_2<0,
h_2-\alpha h_3>0 ,\ h_1=\beta h_3, \ h_3-\alpha h_1>0$$
and $h_i>0 \ (i=1,2,3)$. Compared with the condition $(\tilde{H})$, we only consider the case that
$h_1=\beta h_3, \ h_3-\alpha h_1>0$. Since $h_1=\beta h_3$, it follows from Hsu and Zhao \cite[Lemma 2.5(i)]{SX2012}
that there is no interior fixed points on the coordinate plane $\{x_2=0\}$. By Lemma \ref{lem20},
we denote $\tilde{\Sigma}$ to be the carrying simplex of system (\ref{3ML2}) and
$\tilde{\Gamma}_{13}$ to be the intersection of $\tilde{\Sigma}$ with the coordinate plane
$\{x_2=0\}$. Due to $h_3-\alpha h_1>0$ and Lemma \ref{lem22} (or see Niu et al. \cite[Lemma 3.2]{NWX2021}), we can see that $\tilde{R}_1$ repels along $\tilde{\Gamma}_{13}$.
Note that the restriction of $\tilde{S}$ to the segment $\Gamma_{13}$ is a monotone one-dimensional map,
all orbits on $\Gamma_{13}$ have one fixed point as the $\alpha$-limit set and the other as the $\omega$-limit
set, and then, all orbits on $\Gamma_{13}$ take $R_1$ as the $\alpha$-limit set
and $R_3$ as the $\omega$-limit set. By the cyclic symmetry, it is not difficult to obtain that
system (\ref{3ML2}) admits a
heteroclinic cycle $\tilde{\Gamma}$ connecting three axial fixed points $\tilde{R}_i \ (i=1,2,3)$.
\par On the other hand, when $\varphi \in [0,\frac{2}{7}]$, it follows from Hsu and Zhao\cite[Lemma 2.1]{SX2012}
that there is no interior fixed points on $x_3$-axis, which entails that there does not admit a
heteroclinic cycle connecting three axial fixed points in this case.
When $\varphi \in (\frac{2}{7},\frac{3.2}{9.2}]$, we have
$$h_2-\beta h_1=2.22-8.32\varphi<0, \ h_1-\alpha h_2=7.6\varphi-1.6>0,\ h_3-\beta h_2<0,$$
and $$
h_2-\alpha h_3>0,\
h_1-\beta h_3=-10.54\varphi+4.44>0, \ h_3-\alpha h_1=9.2\varphi-3.2 \leq 0 ;$$
when $\varphi \in (\frac{3.2}{9.2},\frac{4.44}{10.54})$, we also have
$$ h_2-\beta h_1=2.22-8.32\varphi<0, \ h_1-\alpha h_2=7.6\varphi-1.6>0, \ h_3-\beta h_2<0,$$
and $$
h_2-\alpha h_3>0,\
h_1-\beta h_3=-10.54\varphi+4.44> 0, \ h_3-\alpha h_1=9.2\varphi-3.2 > 0.$$
For these two cases, by using the proof of Theorem \ref{thm31} and
Lemma \ref{lem22}-\ref{lem24}, it is not difficult to see that
the boundary of the carrying simplex $\tilde{\Sigma}$ of
system (\ref{3ML2}) can not form  a
heteroclinic cycle connecting three axial fixed points.
Consequently, when $\varphi \in [0,\frac{4.44}{10.54})$, there does not exist a
heteroclinic cycle connecting three axial fixed points  $\tilde{R}_i \ (i=1,2,3)$ for system (\ref{3ML2}).
The proof is completed.
\end{proof}
Noticing that
$$\vartheta=(7.6\varphi-1.6)(5.4\varphi+0.6)(9.2\varphi-3.2)-(8.32\varphi-2.22)(3.71\varphi+2.39)(10.54\varphi-4.44).$$
Solving the algebra equation $\vartheta=0$, there exists a unique positive root
$\varphi_0 \approx 0.6995$ in $\varphi \in [\frac{4.44}{10.54},1]$. Moreover, when $\varphi \in $ $[\frac{4.44}{10.54},1]$, we have
\begin{theorem}
\label{thm42}
Given system (\ref{3ML2}), then
\begin{enumerate}
\item [(i)] If $\varphi \in (\varphi_0,1]$, then $\tilde{\Gamma}$ attracts;
\item [(ii)] If $\varphi \in [\frac{4.44}{10.54},\varphi_0)$, then $\tilde{\Gamma}$ repels,
\end{enumerate}
where $\varphi_0$ is the positive root of the algebra equation $\vartheta=0$.
The bifurcation graph of $\varphi$-$\vartheta$ in $\varphi \in [\frac{4.44}{10.54},1]$ is
demonstrated in Figure 3.
\end{theorem}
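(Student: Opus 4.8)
The plan is to deduce the stability of $\tilde\Gamma$ from the sign of $\vartheta$ exactly as in Theorem~\ref{thm32}, and then to determine that sign as $\varphi$ runs over $[\frac{4.44}{10.54},1]$. First I would invoke Theorem~\ref{thm41}(i): for each $\varphi\in[\frac{4.44}{10.54},1]$ the cycle $\tilde\Gamma$ exists, and for $\varphi$ strictly above the left endpoint the condition $(\tilde H)$ holds together with $h_i>0$ $(i=1,2,3)$, as recorded in the proof of Theorem~\ref{thm41}. Thus on the open interval $(\frac{4.44}{10.54},1]$ the hypotheses of Theorem~\ref{thm32} are met, and the stability of $\tilde\Gamma$ is governed entirely by $\mathrm{sign}\,\vartheta(\varphi)$: positive forces $\tilde\Gamma$ to repel, negative forces it to attract.

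I would then substitute $h_1=12\varphi-2$, $h_2=11\varphi-1$, $h_3=14\varphi-4$ into the definition of $\vartheta$, obtaining the cubic displayed just before the theorem, so the problem reduces to tracking its sign changes. Using the fact, noted before the theorem, that $\vartheta$ has a single zero $\varphi_0\approx0.6995$ in $[\frac{4.44}{10.54},1]$, continuity forces $\vartheta$ to keep a constant sign on each of $[\frac{4.44}{10.54},\varphi_0)$ and $(\varphi_0,1]$; it then suffices to evaluate $\vartheta$ once on each side. At $\varphi=1$ one has $h_1=h_2=h_3=10$, so $\vartheta$ collapses (as in the proof of Corollary~\ref{thm33}) to $h_1^3\big((1-\beta)^3+(1-\alpha)^3\big)$, which is negative because $\alpha+\beta=2.01>2$; equivalently, direct substitution gives $\vartheta(1)=6^3-6.1^3<0$. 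Hence $\tilde\Gamma$ attracts on $(\varphi_0,1]$, which is (i). At the left endpoint $\varphi=\frac{4.44}{10.54}$ the factor $h_1-\beta h_3$ vanishes, so $\vartheta$ reduces to $(h_1-\alpha h_2)(h_2-\alpha h_3)(h_3-\alpha h_1)$; a direct check shows all three factors are positive there, whence $\vartheta>0$ and $\tilde\Gamma$ repels throughout $[\frac{4.44}{10.54},\varphi_0)$, which is (ii).

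The one genuinely non-routine point is the assertion that $\vartheta$ has \emph{exactly} one zero in $[\frac{4.44}{10.54},1]$; without it the single sign change at $\varphi_0$ could not be guaranteed. I would secure this by examining the cubic directly, for instance by verifying that $\vartheta'$ admits no interior critical configuration producing a second crossing, or by locating all three real roots of $\vartheta$ and confirming that only $\varphi_0$ lies in the interval. A secondary technical point deserving care is the degenerate left endpoint $\varphi=\frac{4.44}{10.54}$, where $h_1=\beta h_3$ makes one boundary eigenvalue equal to $1$ and $(\tilde H)$ holds only with equality, so that Theorem~\ref{thm32} does not apply verbatim; there $\tilde\Gamma$ still exists by Theorem~\ref{thm41}, and I would justify its repulsion either by a limiting argument from the interior where $\vartheta>0$ persists, or by invoking directly that $\vartheta>0$ governs repulsion in the criterion of Jiang, Niu and Wang \cite{JNW2016} used in Theorem~\ref{thm32}. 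Assembling the two sign determinations with Theorem~\ref{thm32} then yields (i) and (ii), and the plot of $\varphi\mapsto\vartheta(\varphi)$ in Figure~3 visualises the single crossing at $\varphi_0$.
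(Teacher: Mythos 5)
Your proposal follows essentially the same route as the paper: the paper's proof consists of the observation that $\vartheta<0$ on $(\varphi_0,1]$ and $\vartheta>0$ on $[\frac{4.44}{10.54},\varphi_0)$, followed by an appeal to Theorem \ref{thm32}. Your endpoint evaluations ($\vartheta(1)=6^3-6.1^3<0$ and $\vartheta>0$ where $h_1=\beta h_3$), your insistence on verifying that $\varphi_0$ is the \emph{only} zero of the cubic in the interval, and your separate treatment of the degenerate left endpoint $\varphi=\frac{4.44}{10.54}$ (where $(\tilde H)$ holds only with equality, so Theorem \ref{thm32} does not apply verbatim) are all details the paper compresses into ``an easy calculation,'' and are correct refinements rather than a different argument.
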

\begin{proof}
For the algebra equation $\vartheta=0$, an easy calculation gives
\begin{enumerate}
\item [(1)] If $\varphi \in (\varphi_0,1]$, then $\vartheta<0$.
\item [(2)] If $\varphi \in [\frac{4.44}{10.54},\varphi_0)$, then $\vartheta>0$.
\end{enumerate}
By Theorem \ref{thm32}, it follows that the statements (i) and (ii) are valid,
which implies that $\varphi_0$ is the critical value
of stability of the heteroclinic cycle $\tilde{\Gamma}$. The proof is completed.
\end{proof}
Consider that $\frac{4.44}{10.54}\approx 0.4213$ and $\varphi_0 \approx 0.6995$,
it follows from Theorem \ref{thm42} that the bifurcation graph of $\varphi$-$\vartheta$ is
shown as below (see Figure 3).
\begin{figure}[htbp]
  \centering
  \includegraphics[width=7cm]{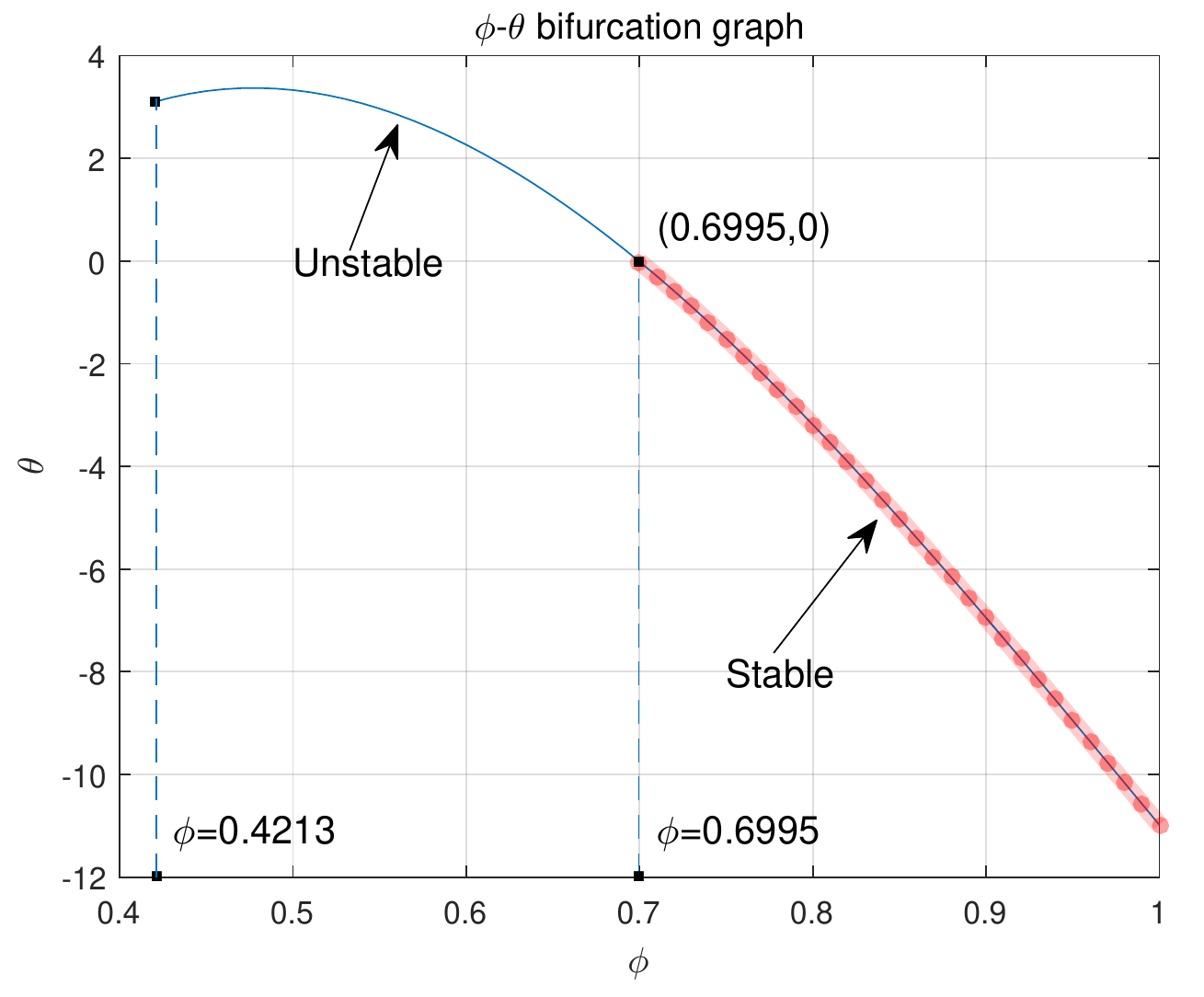}
  \caption{$\varphi$-$\vartheta$ bifurcation graph, $\varphi \in [\frac{4.44}{10.54},1]$}
\end{figure}
\par According to above analyses, we take different values of $\varphi$ to observe
the dynamics of system (\ref{3ML2}) via numerical simulation.
Firstly, taking $\varphi=0.73 $, one can see that
the orbit emanating from initial value
$x^0=(0.5,0.6,0.4)$ for
the Poincar$\acute{\rm e}$ map $\tilde{S}$
generated by system (\ref{3ML2})
tends to the heteroclinic cycle $\tilde{\Gamma}$ from Figure 4. The corresponding
solution flow is indicated in Figure 5, which states that system (\ref{3ML2}) possesses the
May-Leonard phenomenon.
\begin{figure}[htbp]
\centering
\begin{minipage}[t]{0.48\textwidth}
\centering
\includegraphics[width=6.5cm]{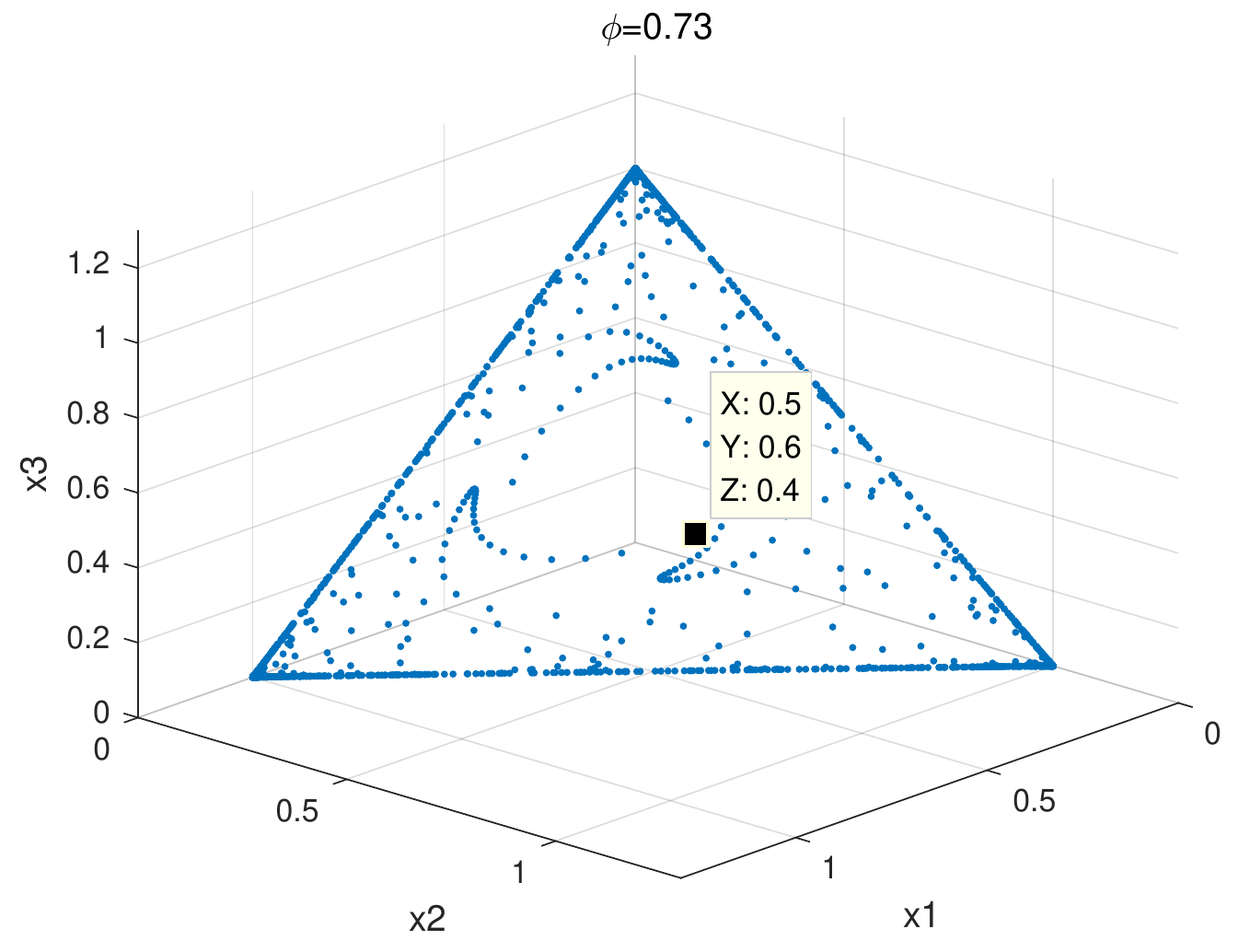}
\caption{\footnotesize Tend to $\tilde{\Gamma}$ }
\end{minipage}
\begin{minipage}[t]{0.48\textwidth}
\centering
\includegraphics[width=6.5cm]{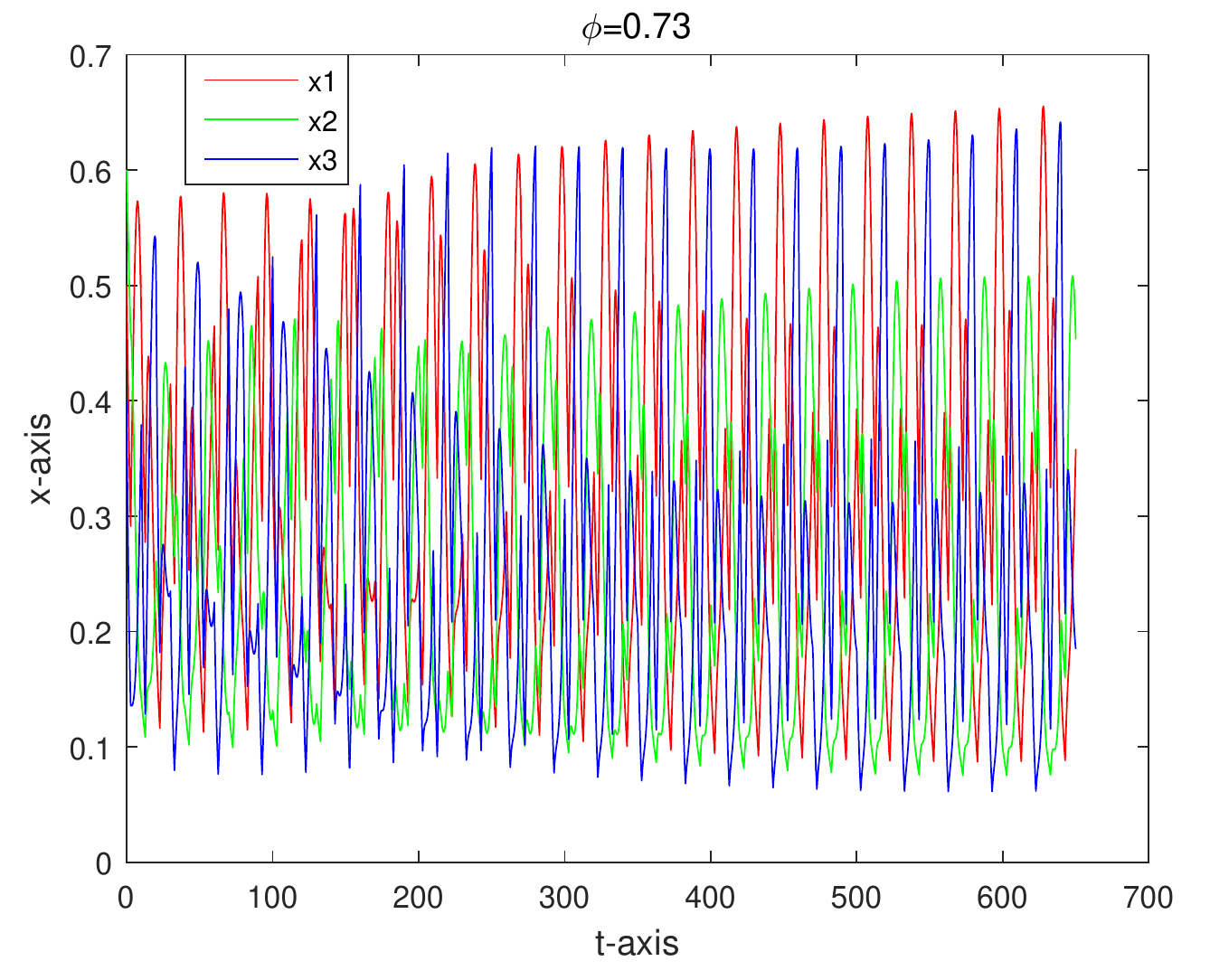}
\caption{\footnotesize Tend to $\tilde{\Gamma}$}
\end{minipage}
\end{figure}
\par When taking $\varphi=0.631$ and $\varphi=0.629 $, one can see that
the orbit emanating from initial value
$x^0=(0.5,0.6,0.4)$ for
$\tilde{S}$
tends to invariant closed curves from Figure 6 and Figure 7. Meanwhile, we also find that
 the invariant closed curve shrinks as $\varphi$ decreases.

\begin{figure}[htbp]
\centering
\begin{minipage}[t]{0.48\textwidth}
\centering
\includegraphics[width=6.5cm]{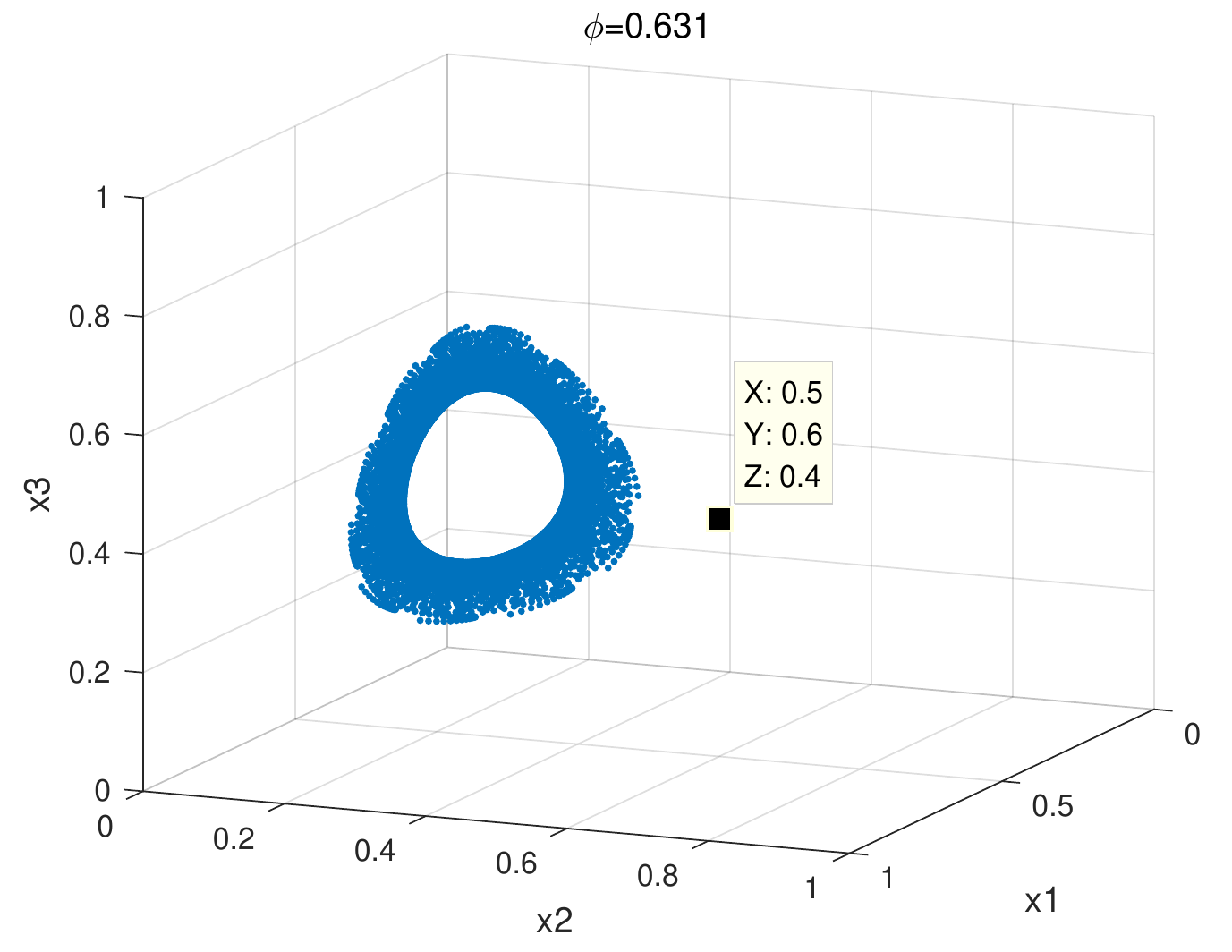}
\caption{\scriptsize Tend to invariant closed curve }
\end{minipage}
\begin{minipage}[t]{0.48\textwidth}
\centering
\includegraphics[width=6.5cm]{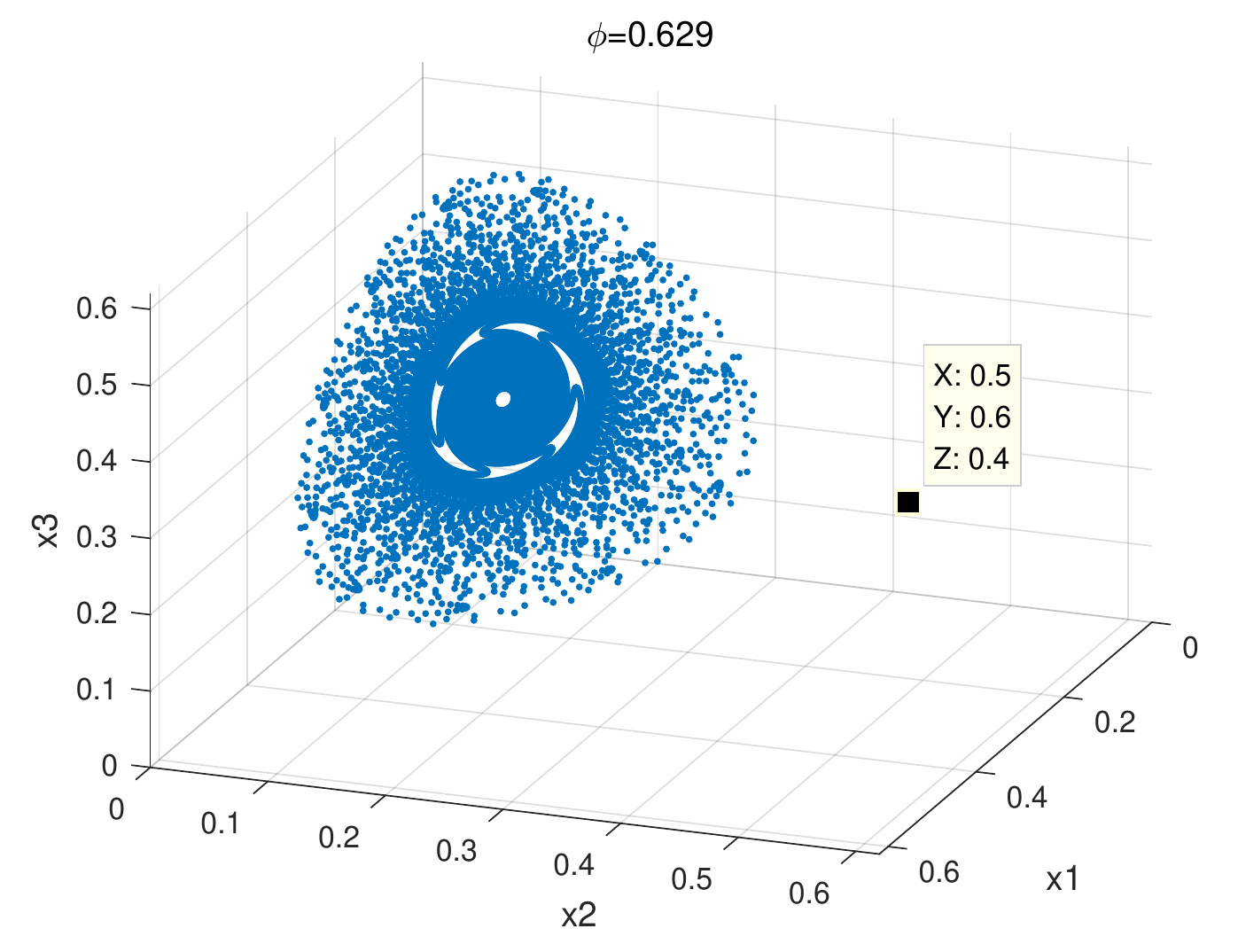}
\caption{\scriptsize Tend to invariant closed curve}
\end{minipage}
\end{figure}

\par When taking $\varphi=0.61$ $\varphi=0.59, \varphi=0.58$ and $\varphi=0.52 $, one can see that
the invariant closed curve for $\tilde{S}$ has degenerated into a positive fixed point from Figure 8-12. Furthermore,
The patterns converging to positive fixed points are rich and varied.

\begin{figure}[htbp]
\centering
\begin{minipage}[t]{0.48\textwidth}
\centering
\includegraphics[width=6.5cm]{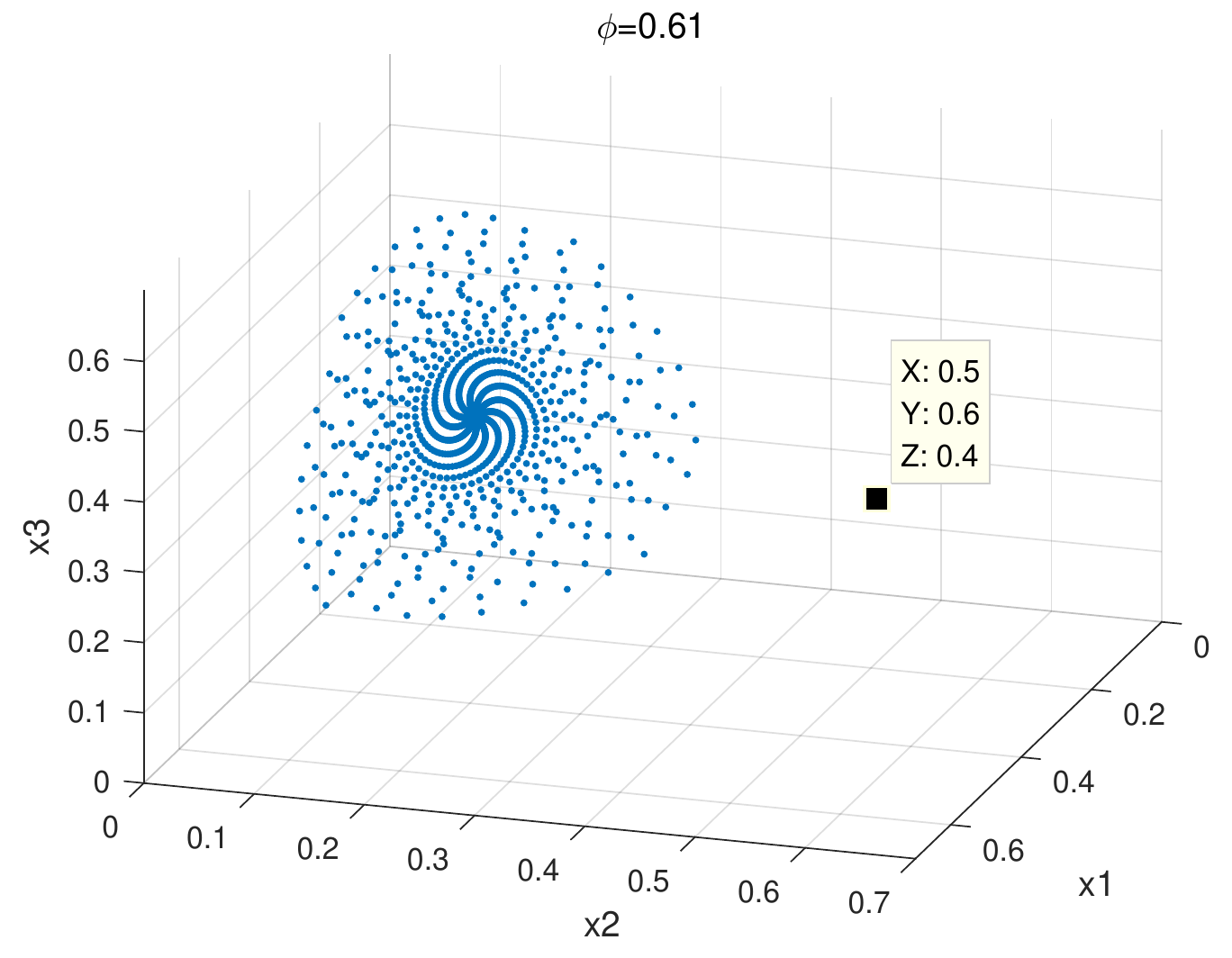}
\caption{\scriptsize Tend to the positive fixed point}
\end{minipage}
\begin{minipage}[t]{0.48\textwidth}
\centering
\includegraphics[width=6.5cm]{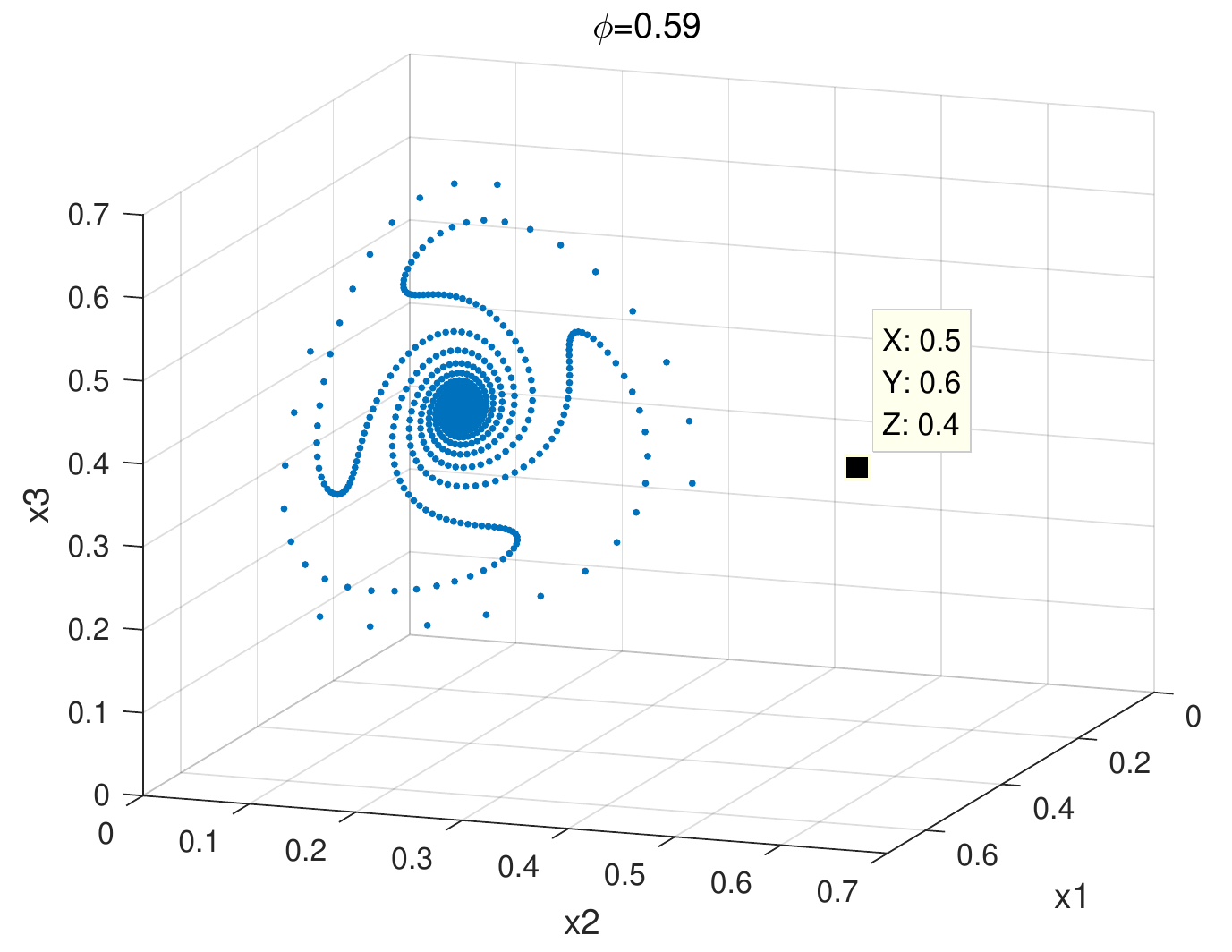}
\caption{\scriptsize Tend to the positive fixed point}
\end{minipage}
\end{figure}

\begin{figure}[htbp]
\centering
\begin{minipage}[t]{0.48\textwidth}
\centering
\includegraphics[width=6.5cm]{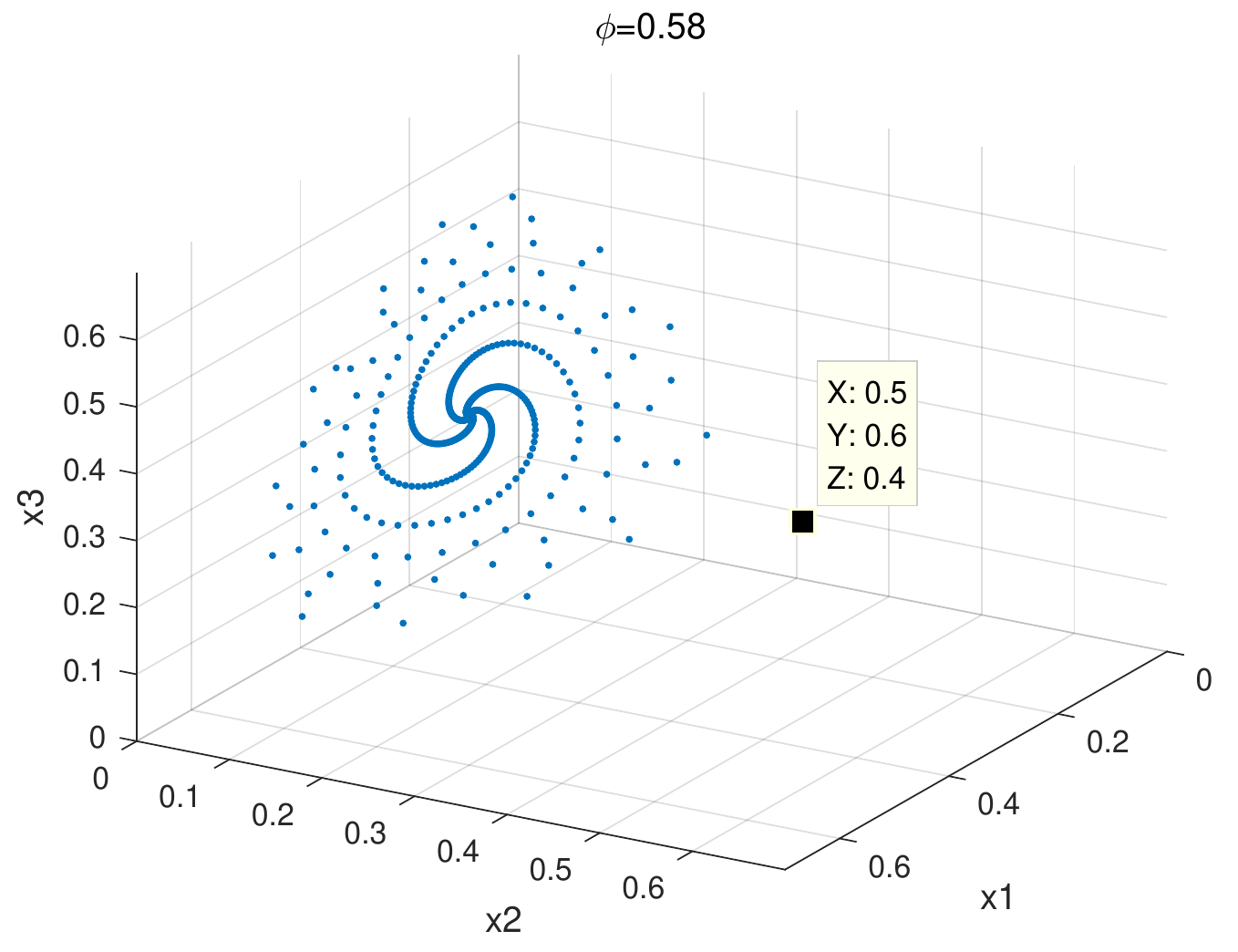}
\caption{\scriptsize Tend to the positive fixed point}
\end{minipage}
\begin{minipage}[t]{0.48\textwidth}
\centering
\includegraphics[width=6.5cm]{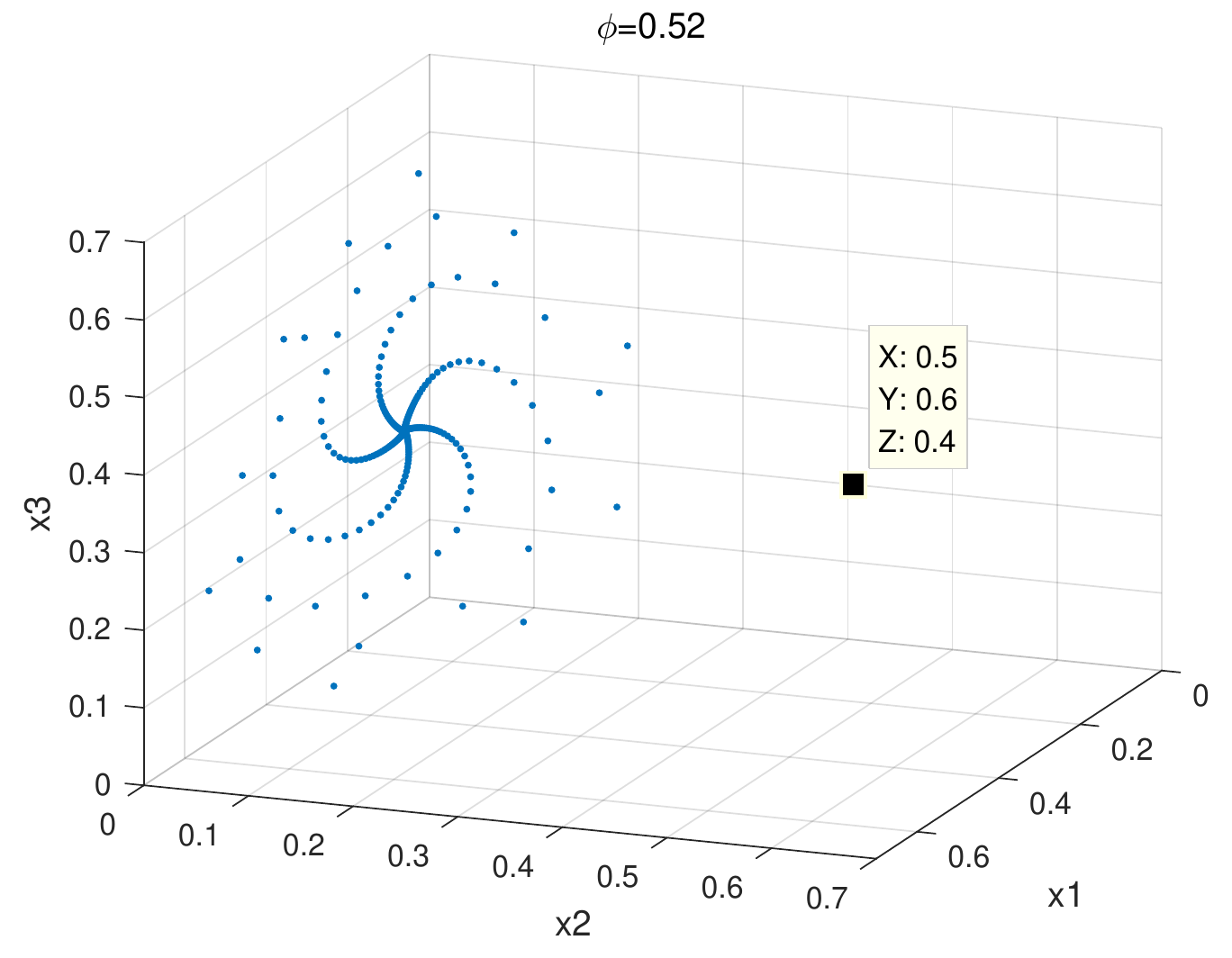}
\caption{\scriptsize Tend to the positive fixed point}
\end{minipage}
\end{figure}
\par By Theorem \ref{thm41}, when $\varphi \in [0,$ $\frac{4.44}{10.54})$, system (\ref{3ML2}) does not admit the heteroclinic cycle.
Using numerical simulation, we find that
the orbit emanating from initial value
$x^0=(0.5,0.6,0.4)$ for
$\tilde{S}$
tends to different fixed points as $\varphi$ decreases gradually. More precisely,
when taking $\varphi=0.39$, one can see that
the orbit for $\tilde{S}$
tends to a positive fixed point from Figure 13;
when taking $\varphi=0.36$, one can see that
the orbit for $\tilde{S}$
tends to the interior fixed point of coordinate plane $\{x_2=0\}$
from Figure 14;
when taking $\varphi=0.34$, one can see that
the orbit for $\tilde{S}$
tends to the interior fixed point of $x_1$-axis
from Figure 15;
when taking $\varphi=0.22$, one can see that
the orbit for $\tilde{S}$
tends to the interior fixed point of coordinate plane $\{x_3=0\}$
from Figure 16;
when taking $\varphi=0.1$, one can see that
the orbit for map $\tilde{S}$
tends to the interior fixed point of $x_2$-axis
from Figure 17;
when taking $\varphi=0.09$, one can see that
the orbit for $\tilde{S}$
tends to the trivial fixed point $O$
from Figure 18.
\begin{figure}[htbp]
\centering
\begin{minipage}[t]{0.48\textwidth}
\centering
\includegraphics[width=6.5cm]{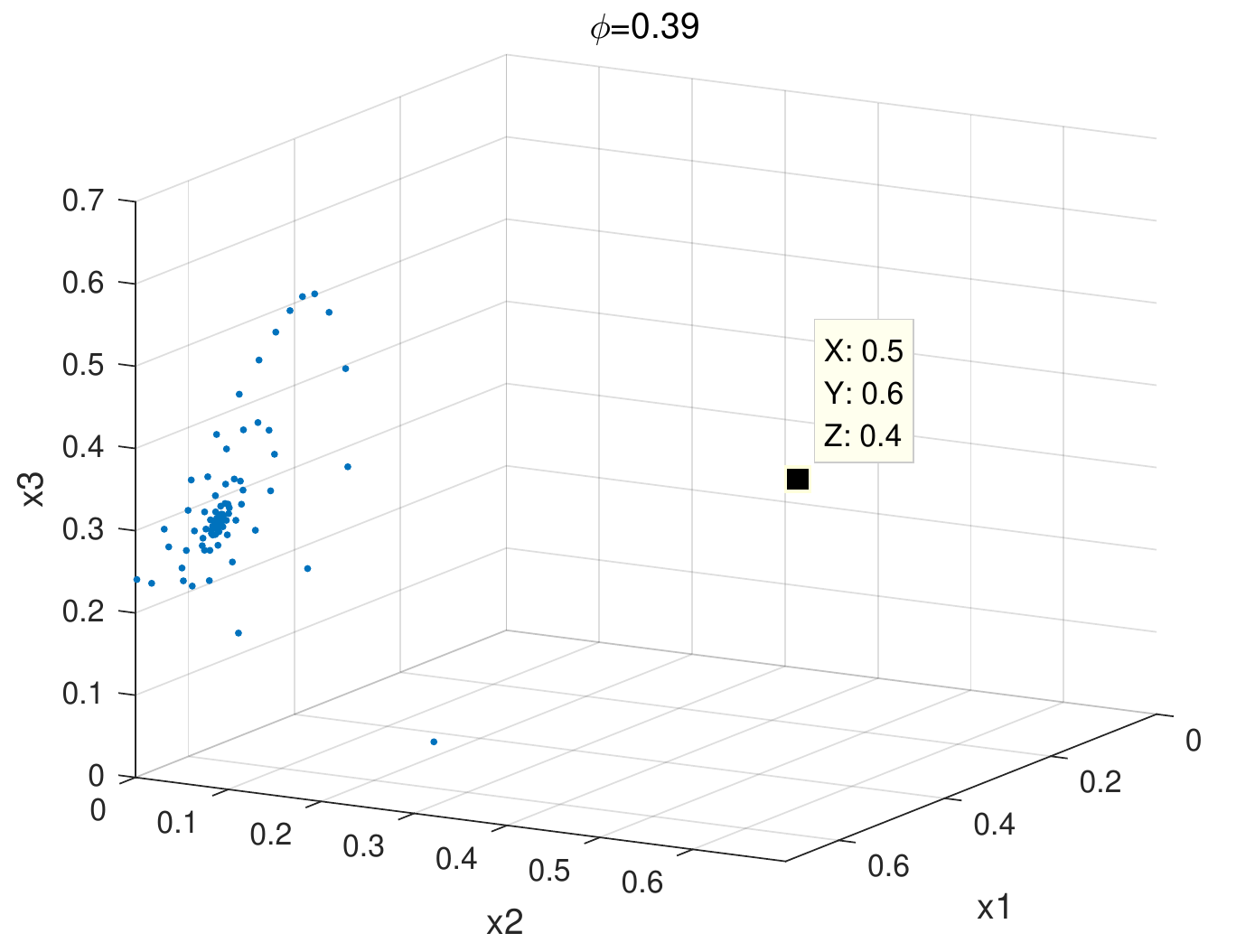}
\caption{\scriptsize Tend to the positive fixed point}
\end{minipage}
\begin{minipage}[t]{0.48\textwidth}
\centering
\includegraphics[width=6.5cm]{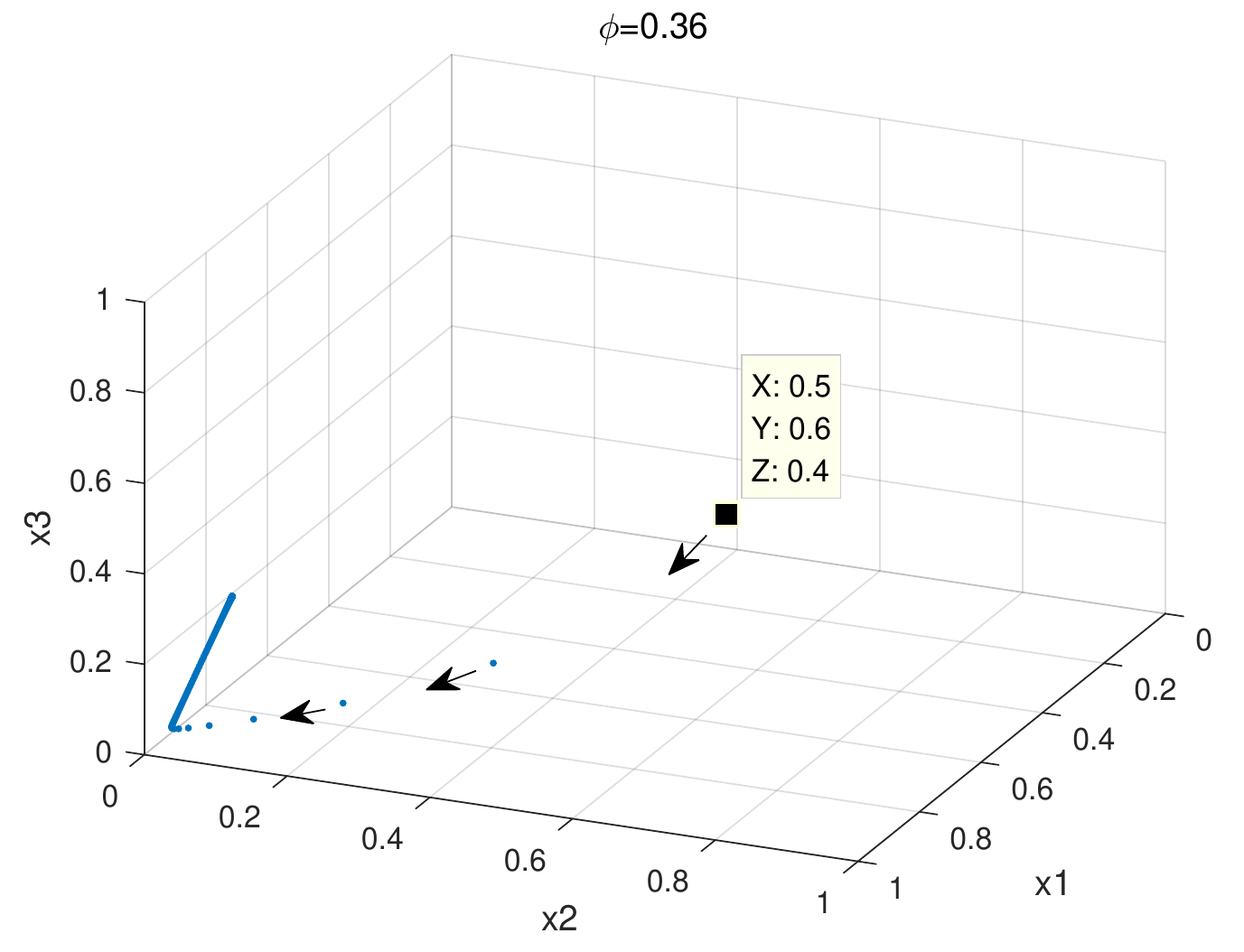}
\caption{\scriptsize Tend to the interior fixed point of $\{x_2=0\}$}
\end{minipage}
\end{figure}

\par In conclusion, compared to the classical May-Leonard competition model (\ref{ML}),
system (\ref{3ML2}) has much richer dynamics
as $\varphi$ varies.
To be more explicit, when $\varphi$ varies in $[0,1]$, system (\ref{3ML2})
enjoys different dynamic features: the orbit emanating from initial value
$x^0=(0.5,0.6,0.4)$ for
the Poincar$\acute{\rm e}$ map $\tilde{S}$
tends to the heteroclinic cycle $\tilde{\Gamma}$(see Figure 4)
$\rightarrow$ invariant closed curves (see Figure 6 and Figure 7)
$\rightarrow$ positive fixed points (see Figure 8-12)
$\rightarrow$ the interior fixed point of coordinate plane $\{x_2=0\}$ (see Figure 13)
$\rightarrow$ the interior fixed point of $x_1$-axis (see Figure 14)
$\rightarrow$ the interior fixed point of the coordinate plane $\{x_3=0\}$ (see Figure 15)
$\rightarrow$ the interior fixed point of $x_2$-axis (see Figure 16)
$\rightarrow$ the trivial fixed point $O$ (see Figure 17).
\begin{figure}[htbp]
\centering
\begin{minipage}[t]{0.48\textwidth}
\centering
\includegraphics[width=6.5cm]{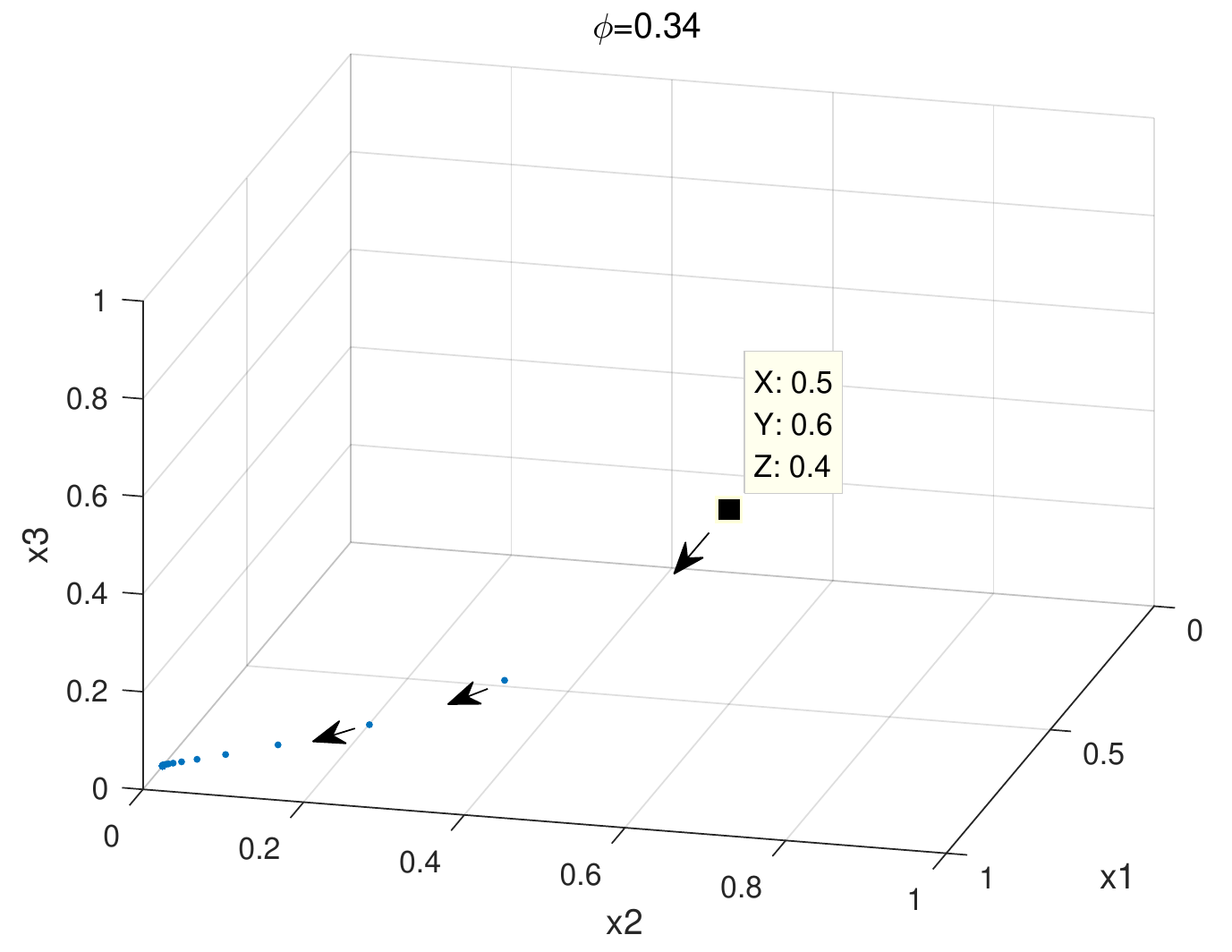}
\caption{\scriptsize Tend to the interior fixed point of $x_1$-axis}
\end{minipage}
\begin{minipage}[t]{0.48\textwidth}
\centering
\includegraphics[width=6.5cm]{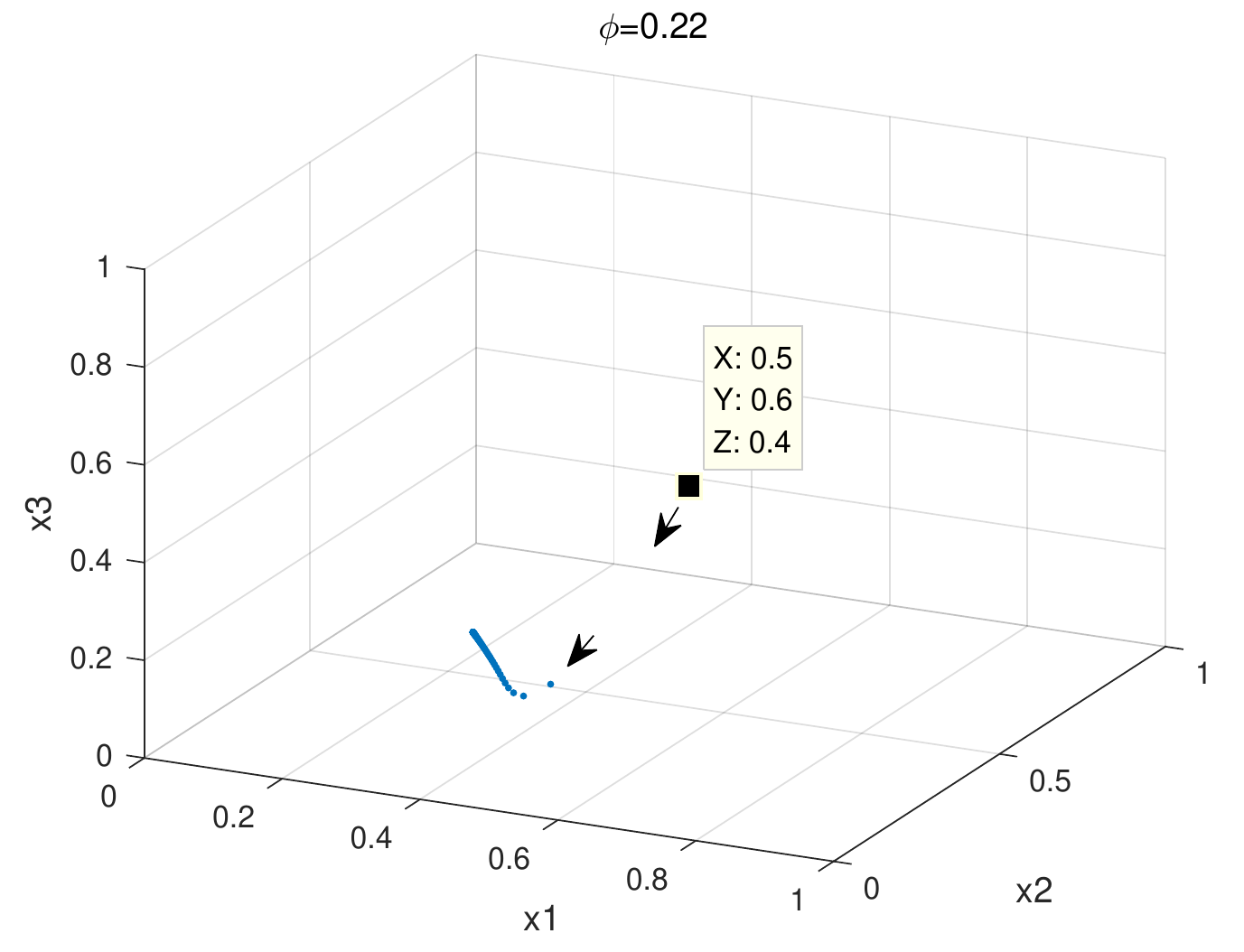}
\caption{\scriptsize Tend to the interior fixed point of $\{x_3=0\}$}
\end{minipage}
\end{figure}

\begin{figure}[htbp]
\centering
\begin{minipage}[t]{0.48\textwidth}
\centering
\includegraphics[width=6.5cm]{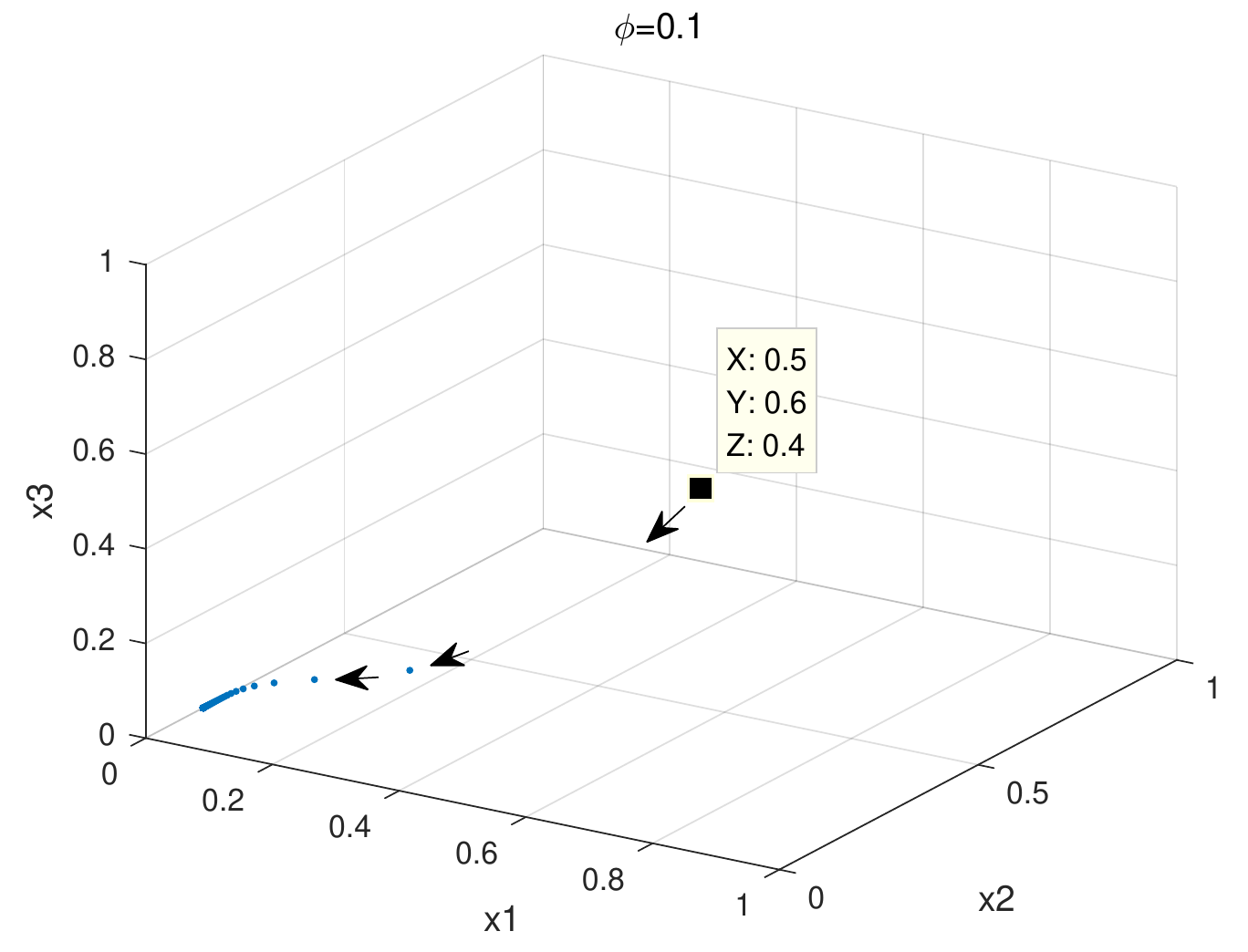}
\caption{\scriptsize Tend to the interior fixed point of $x_2$-axis}
\end{minipage}
\begin{minipage}[t]{0.48\textwidth}
\centering
\includegraphics[width=6.5cm]{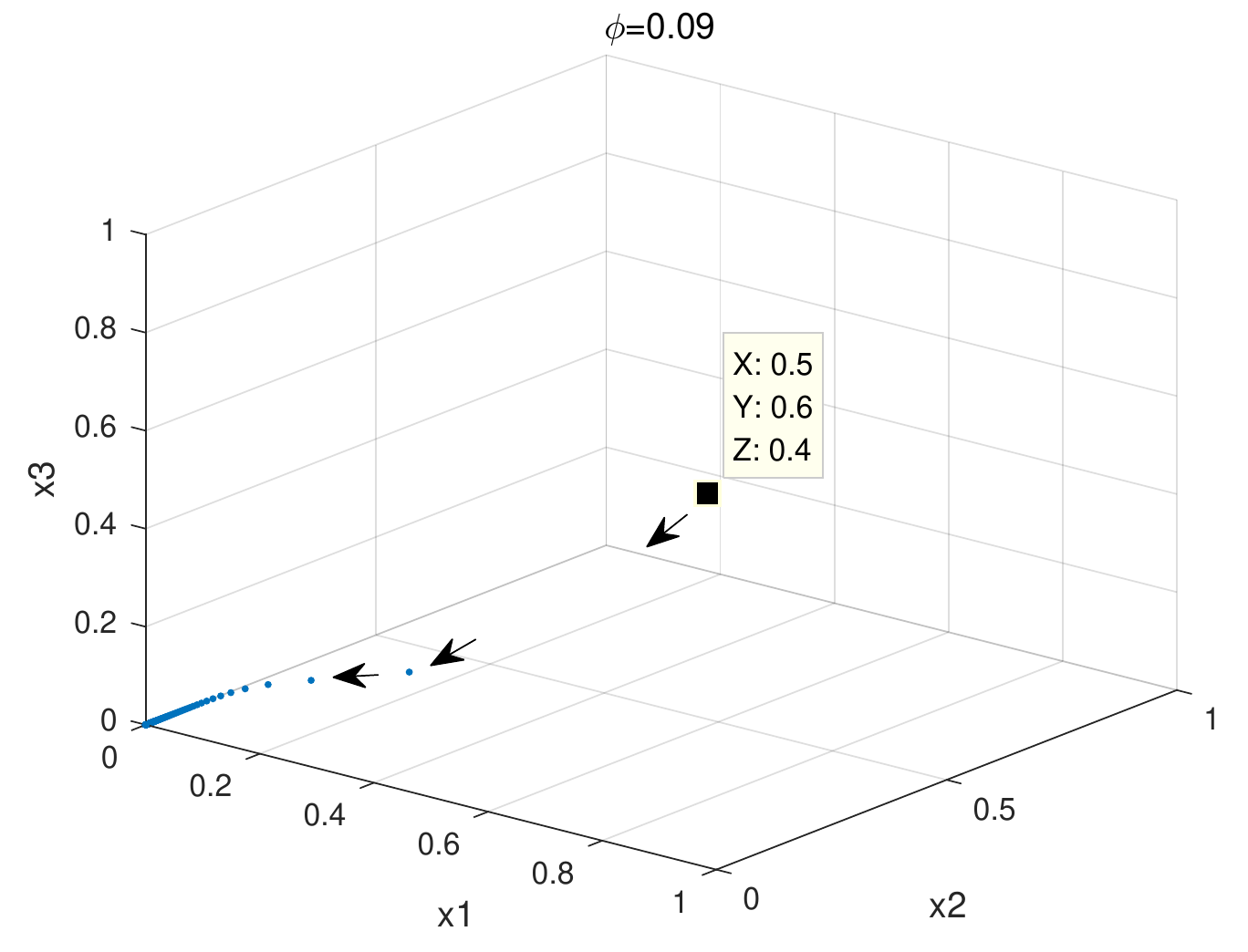}
\caption{\scriptsize Tend to the trivial fixed point $O$}
\end{minipage}
\end{figure}

\section{Discussion}
In this paper, we focus on a symmetric May-Leonard competition model with seasonal succession.
By virtue of the carrying simplex, we obtain that system (\ref{3ML}) admits a heteroclinic cycle
connecting three axial fixed point $R_i, i=1,2,3$, which is the boundary of the carrying simplex.
By the stability theory of heteroclinic cycles for discrete kolmogorov competitive maps
proposed by Jiang et al. \cite{JNW2016},
the stability criterion of the heteroclinic cycle for system (\ref{3ML}) is established.
Especially, when $\lambda_1=\lambda_2=\lambda_3$ and $\alpha+\beta \neq 2$, it is shown that
system (\ref{3ML}) inherits stability properties of the heteroclinic cycle of the classical May-Leonard
competition model (\ref{ML}), which implies that the stability of the heteroclinic cycle has not been changed by
the introduction of seasonal succession in this case.
Besides, we also present the explicit expression of the carrying simplex, which states that
the carrying simplex is flat in the special case. Applying the stability criterion to
a given system (\ref{3ML2}), we obtain that the stability bifurcation of the heteroclinic cycle
for such a system. By numerical simulation, we not only illustrate the effectiveness of our
theoretical results, but also exhibit rich dynamics (including
nontrivial periodic solutions, invariant closed curves and heteroclinic cycles)
for the system. Compared to concrete discrete-time competitive maps discussed in
\cite{GJNY2018,GJNY2020,JL2016,JL2017,JN22017}, the dynamic patterns converging to
invariant closed curves and positive fixed points for system (\ref{3ML2}) are much richer.
We guess that the interesting and beautiful dynamic patterns are generated by the switching between two subsystems.
\par In addition, lots of numerical simulations strongly suggest that if
system (\ref{3ML}) has one positive periodic solution, then the positive periodic solution is unique.
The estimate for the Floquet multipliers of the positive
periodic solution and the complete classification for the global dynamics of system (\ref{3ML})
are all challenging problems for us. We will leave them for future research.

\bibliography{D:/Documents/Projects/bb/dd}
\bibliographystyle{D:/Documents/Projects/bb/siamplain}

\end{document}